\documentclass[a4paper]{article}

\usepackage[utf8]{inputenc}
\usepackage{lmodern}
\usepackage[T1]{fontenc}

\usepackage{color}

\usepackage{mathtools, amssymb, amsthm}
\usepackage{enumitem}
\usepackage{bbm,dsfont}
\numberwithin{equation}{section}

\newcommand{\MR}{\textit{MR}}
\newcommand{\CP}{\!\textit{CP}}
\newcommand{\NLCP}{\!\textit{NLCP}}
\newcommand{\const}{\textit{const}\,}

\newcommand{\K}{\mathds{K}}
\newcommand{\R}{\mathds{R}}
\newcommand{\C}{\mathds{C}}
\newcommand{\N}{\mathds{N}}
\newcommand{\A}{\mathcal{A}}

\newcommand{\D}{\mathcal{D}}

\renewcommand{\L}{\mathcal{L}}
\renewcommand{\d}{\mathrm{d}}
\renewcommand{\Re}{\operatorname{Re}}

\newcommand{\fra}{\mathfrak{a}}

\renewcommand{\mid}{\, \vert \,}

\DeclarePairedDelimiter\abs{\lvert}{\rvert}
 \DeclarePairedDelimiter\norm{\lVert}{\rVert} 

\theoremstyle{plain}
\newtheorem{theorem}{Theorem}[section]
\newtheorem{proposition}[theorem]{Proposition}
\newtheorem{lemma}[theorem]{Lemma}
\newtheorem{corollary}[theorem]{Corollary}

\theoremstyle{definition}

\newtheorem{problem}[theorem]{Problem}

\theoremstyle{remark}
\newtheorem{remark}[theorem]{Remark}

\begin{document}
\title{Maximal Regularity for Evolution Equations Governed by Non-Autonomous Forms}
\author{
    Wolfgang Arendt,
    Dominik Dier,
    Hafida Laasri,
    El Maati Ouhabaz\footnote{Corresponding author.}
}


\maketitle

\begin{abstract}\label{abstract}
We consider a non-autonomous evolutionary problem
\[
\dot{u} (t)+\A(t)u(t)=f(t), \quad u(0)=u_0
\]
where the operator $\A(t)\colon V\to V^\prime$ is associated with a form $\fra(t,.,.)\colon V\times V \to \R$ and $u_0\in V$.
Our main concern is to prove well-posedness with maximal regularity which means the following. 
Given a Hilbert space $H$ such that $V$ is continuously and densely embedded into $H$ and given $f\in L^2(0,T;H)$ we are interested in solutions $u \in H^1(0,T;H)\cap L^2(0,T;V)$. 
We do prove well-posedness in this sense whenever the form is piecewise Lipschitz-continuous and satisfies the square root property.
Moreover, we show that each solution is in $C([0,T];V)$.
The results are applied to non-autonomous Robin-boundary conditions and maximal regularity is used to solve a quasilinear problem.
\end{abstract}

\bigskip
\noindent  
{\bf Key words:} Sesquilinear forms, non-autonomous evolution equations, maximal regularity, non-linear heat equations.\medskip

\noindent
\textbf{MSC:} 35K90, 35K50, 35K45, 47D06.

\section{Introduction}\label{section:introduction}

The aim of this article is to study non-autonomous evolution equations governed by forms. We consider Hilbert spaces $H$ and $V$ such that $V$ is continuously embedded into $H$ and a form
\[
	\fra\colon [0,T]\times V\times V \to \C
\]
such that $\fra(t, .,.)$ is sesquilinear for all $t\in[0,T]$,
$\fra(.,u,v)\colon [0,T]\to\C$ is measurable for all $u,v \in V$, 
\[
	\abs{ \fra(t,u,v) } \le M \norm{u}_V \norm{v}_V \quad (t\in[0,T]) \qquad \text{  ({\it $V$-boundedness})}
\] 
 and such that 
\[
	\Re \fra (t,u,u) \ge \alpha \|u\|^2_V + \omega \norm{u}_H^2 \quad (u\in V, t\in [0,T]) \qquad \text{  ({\it quasi-coerciveness}) }
\] 
where $M \ge 0$, $\alpha > 0$ and $\omega \in \R$. 
For fixed $t\in [0,T]$ there is a unique operator $\A(t)\in \L(V,V^\prime)$ such that $\fra(t,u,v)= \langle \A(t)u,v\rangle$ for all $u,v \in V$.
\begin{theorem}[Lions' Theorem]\label{thm:Lions}
	Given $f\in L^2(0,T;V^\prime)$ and $u_0\in H$, there is a unique solution $u \in \MR(V,V'):= L^2(0,T;V)\cap H^1(0,T;V')$ of
	the Cauchy problem
	\begin{equation} \label{equ1.1}
	\dot u (t)+\A(t)u(t)=f(t)\quad (t\in (0,T)), \qquad u (0)=u_0.
\end{equation}
\end{theorem}
Note that 
$\MR(V,V^\prime) \hookrightarrow C([0,T], H)$
so  that the initial condition makes sense.
We refer to \cite[p.\ 112]{Sho97}, \cite[XVIII Chapter 3, p.\ 513]{DL92} for the proof of this theorem. 
Lions' theorem states well-posedness of the Cauchy problem \eqref{equ1.1} with maximal regularity in $V'$.
However, the result is not really satisfying since in concrete situations one is interested in solutions which take values in $H$ and not merely in $V^\prime$ (note that $H\hookrightarrow V^\prime$ by the canonical identification). In fact, if we consider boundary value problems, only the part $A(t)$ of $\A(t)$ in $H$ does really realize the boundary conditions in question. So the general problem is whether maximal regularity in $H$ is valid in the following sense:

\begin{problem}\label{Pr1}
If $f\in L^2(0,T; H)$ and $u_0\in V$,  is the solution of (\ref{equ1.1}) in $\MR(V,H):= H^1(0,T;H) \cap L^2(0,T;V)$?
\end{problem}

One has to distinguish the two cases $u_0 = 0$ and $u_0 \neq 0$. For $u_0 = 0$ Problem~\ref{Pr1} is explicitly asked by Lions \cite[p.\ 68]{Lio61}
and seems to be open up to today. A positive answer is given by Lions if $\fra$ is symmetric (i.e.\  $\fra(t,u,v)=\overline{\fra(t,v,u)}$) and $\fra(.,u,v) \in C^1 [0,T]$ for all $u,v \in V$.  
By a completely different
approach a positive answer is also given in \cite{OS10} for general (possibly non-symmetric) forms such that 
$\fra(.,u,v) \in C^\alpha[0,T]$ for all $u,v \in V$ and some
$\alpha > \frac 1 2$. Again, the result in  \cite{OS10} concerns the case $u_0=0$.

Concerning $u_0 \not= 0$ it seems natural to assume $u_0 \in V$ as we did in Problem~\ref{Pr1}. 
However, already in the autonomous case, i.e.\ $A(t) \equiv A$, the solution is in $\MR(V,H)$ if and only if $u_0 \in D(A^{1/2})$,
and it may happen that $V\not\subset D(A^{1/2})$ (failure of the square root property).
So one has to impose a stronger condition on the initial value $u_0$ or the form (e.g.\ symmetry or more generally, the square root property).
Lions \cite[p.\ 94]{Lio61} gave a positive answer for $u_0 \in D(A(0))$ provided that 
$\fra(., u, v) \in C^2[0,T]$ for all $u, v \in V$ and $f \in H^1(0,T;H)$.
Moreover, 
a little bit hidden in his book one finds the following result:
a combination of \cite[Theorem~1.1, p.~129]{Lio61} and \cite[Theorem~5.1, p.~138]{Lio61} shows that 
if $\fra(., u, v) \in C^1[0,T]$ and $\fra(t, u, v)=\overline{\fra(t, v, u)}$ for all
$u, v \in V$, $t\in [0,T]$, then Problem~\ref{Pr1} has a positive answer. 
Finally, we mention a result of Bardos \cite{Bar71} who gives a positive answer to Problem \ref{Pr1} under the assumptions that the domains of both $A(t)^{1/2}$ and 
$A(t)^{*1/2}$ coincide with $V$ as spaces and topologically with constants independent of $t$,
and that $\A(.)^{1/2}$ is continuously differentiable with values in $\L(V,V')$.

Now we explain our contribution to the problem of maximal regularity formulated in Problem~\ref{Pr1}.
We suppose that the sesquilinear form $\fra$ can be written as 
$\fra(t,u,v) = \fra_1(t,u,v) + \fra_2(t,u,v)$
where $\fra_1$ is symmetric, $V$-bounded and coercive as above and piecewise Lipschitz-continuous on $[0,T]$,
whereas $\fra_2\colon [0,T] \times V \times H \to \C$ satisfies $\abs{\fra_2(t,u,v)}\le M_2 \norm u_V \norm v_H$ 
and $\fra_2(.,u,v)$ is measurable for all $u\in V$, $v \in H$.
Furthermore we consider a more general Cauchy problem than $\eqref{equ1.1}$ introducing a multiplicative perturbation
$B\colon [0,T] \to \L(H)$ which is strongly measurable such that $\norm{B(t)}_{\L(H)} \le \beta_1$ for all $t \in [0,T]$ and
$0 < \beta_0 \le (B(t)g \mid g)_H$ for $g \in H$, $\norm{g}_H=1$, $t \in [0,T]$
and study the problem
\begin{equation} \label{eq:CP_with_B}
	\dot u (t)+ B(t)A(t)u(t)=f(t)\quad (t\in (0,T)), \qquad u (0)=u_0
\end{equation}
(where $A(t)$ is the part of $\A(t)$ in $H$).
The multiplicative perturbation is needed for several applications to non-linear problems (see below).
Our main result on maximal regularity is the following (Corollary~\ref{corW}): 
Given $f\in L^2(0,T;H)$, $u_0\in V$ there is a unique solution $u\in H^1(0,T;H) \cap L^2(0,T;V)$ of $\eqref{eq:CP_with_B}$.
This extends the result of Lions mentioned above.

In the case where $B(t) \equiv I$ (or even $B(t)=\beta(t) I$) then we show that Problem~\ref{Pr1} has a positive answer even if $\fra_1$ is not symmetric.
What is needed is the square root property, similar to the assumptions made by Bardos.
Thus we also generalize Bardos' result with a completely different method though.
In fact, the method of this paper is based on a careful analysis of $\A(t)^{1/2}$ which allows us to establish a non-autonomous similarity transform from 
\[
	\MR_\fra(H):=\{u\in H^1(0,T;H)\cap L^2(0,T;V) : \A(.)u(.) \in L^2(0,T;H)\}
\]
to $\MR(V,V')$ (cf.\ Theorem~\ref{thm:iso}).

One of our other results, established in Section 4, shows that the solution is automatically in $C([0,T],V)$. 
In fact, the classical result of Lions says that
\begin{equation}\label{eq:embedding_in_continuous_functions}
    \MR(V,V') \hookrightarrow C([0,T];H),
\end{equation}
see \cite[p.\ 106]{Sho97}. In the non-autonomous situation considered here we prove that $\MR_\fra(H)$
is continuously embedded into $C([0,T]; V)$.

Note that if $u \in \MR(V,H)$ is a solution of $\eqref{equ1.1}$,  then automatically $u \in \MR_\fra(H) \subset C([0,T]; V)$. 
It is this continuity with values in $V$ which allows us to weaken the regularity assumption on the form $\fra(t, ., .)$ from 
Lipschitz-continuity in Theorem~\ref{thm:well-posedness_in_H} to piecewise Lipschitz continuity on $[0,T]$ in Corollary~\ref{corW}.

The embedding of $\MR_\fra(H)$ into $C([0,T]; V)$ has important consequences for applications (see e.g.\ \cite{ADK13}).
Moreover, we also show that the embedding of $\MR_\fra(H)$ into $L^2(0,T;V)$ is compact whenever $V$ is compactly embedded in $H$.
This is important for our application to quasilinear problems given in Section~\ref{sec:quasilinear}.

We illustrate our abstract results by three applications.  
One of them concerns the heat equation with non-autonomous Robin-boundary-conditions
\begin{equation} \label{equ1.2}
\partial_\nu u(t)+\beta(t)u(t) \vert_{\partial\Omega}=0
\end{equation} 
on a bounded Lipschitz domain $\Omega$.
Here $\partial_\nu$ denotes the normal derivative.
Under appropriate assumptions on $\beta$ we prove {\it maximal regularity}, i.e., that the solution is in $\MR(H^1(\Omega),L^2(\Omega))$.  
This is of great importance if non-linear problems are considered. 
As an example we prove existence of a solution of the problem
\begin{equation*}
 \left\{  \begin{aligned}
          \dot u(t)  &=  m(t, x, u(t), \nabla u(t)) \Delta u(t) +  f(t)\\
                        u(0)    &= u_0 \in H^1(\Omega)\\
                         \partial_\nu u(t)&+ \beta (t, .)u (t )= 0 \mbox{  on  } \partial \Omega\\
         \end{aligned} \right.
\end{equation*}
i.e., a quasilinear problem with non-autonomous Robin boundary conditions on a bounded Lipschitz domain $\Omega \subset \R^d$. 
It is here that we need well-posedness and maximal regularity of problem $\eqref{eq:CP_with_B}$ with multiplicative perturbation
(of the form $Bg= m(t, x, u(t), \nabla u(t)) g$).
Previous results (see \cite{AC10}) did not allow non-autonomous boundary conditions. Finally, we also mention that our main result, Corollary
\ref{corW}, with a suitable $B(t)$ is used in an essential way in \cite{ADK13} to prove a well-posedness  result  for an evolutionary problem
on a network with time dependent transmission conditions. 

The paper is organized as follows.  Section 2 has preliminary character and is devoted to estimates on operators associated with forms. 
In Section 3 we prove that multiplication by $\A^{1/2}(.)$ defines an isomorphism from $\MR_\fra(H)$ onto $\MR(V,V')$.
This result is our main tool for the remainder of the paper.
In the same section it is shown that $\MR_\fra(H)$ is continuously embedded into $C([0,T];V)$ 
and also compactly embedded into $L^2([0,T]; V)$ if in addition the embedding of $V$ in $H$ is compact. 
Our main result on well-posedness with maximal regularity in $H$ is established in Section 4.
A  series of examples concerning parabolic equations is given in Section 5, where the main point concerns non-autonomous boundary conditions. 
Several new mapping theorems for vector-valued one-dimensional (mixed) Sobolev spaces are proved in the appendix.

\subsection*{Acknowledgment}
It is a pleasure to thank Marjeta Kramar for stimulating discussions on non-autonomous boundary value problems.
The authors obtained diverse financial support which they gratefully acknowledge: 
D.\ Dier is a member of the DFG Graduate School 1100: Modelling, Analysis and Simulation in Economathematics, 
H.\ Laasri stayed at the University of Ulm with the help of a DAAD-grant, 
E.\ M.\ Ouhabaz visited the University of Ulm in the framework of the Graduate School: 
Mathematical Analysis of Evolution, Information and Complexity financed by the Land Baden-Württemberg and 
W.\ Arendt enjoyed a wonderful research stay at the University of Bordeaux.
The research of E.\ M.\ Ouhabaz  is partly supported by the ANR project  ``Harmonic Analysis at its Boundaries'',  ANR-12-BS01-0013-02.

\section{Forms and associated operators}\label{section:forms}

Throughout this paper the underlying field is $\K = \C$ or $\R$.
This means that all results are valid no matter whether the
underlying field is $\R$ or $\C$. Let $V, H$ be two Hilbert spaces
over $\K$.  Their   scalar  products and the corresponding norms will be denoted by  $(. \mid .)_H$, $(. \mid .)_V$, $\norm{.}_H$ and $\norm{.}_V$, respectively.  We assume that
\[
    V \underset d \hookrightarrow H;
\]
i.e., $V$ is a dense subspace of $H$ such that for some constant
$c_H >0$,
\begin{equation}\label{eq:V_dense_in_H}
    \norm{u}_H \le c_H \norm u _V \quad (u \in V).
\end{equation}
Let
\[
    \fra\colon V \times V \to \K
\]
be sesquilinear and \emph{continuous}; i.e.\
\begin{equation}\label{eq:a_continuous}
    \abs{\fra(u,v)} \le M \norm u _V \norm v _V \quad (u,v \in V)
\end{equation}
for some constant M. We assume that $\fra$ is \emph{quasi-coercive};
i.e.\ there exist constants $\alpha >0$, $\omega \in \R$ such that
\begin{equation}\label{eq:H-elliptic}
    \Re \fra(u,u) + \omega \norm u_H^2 \ge \alpha \norm u _V^2 \quad (u \in V).
\end{equation}
If $\omega=0$, we say that  the form $\fra$ is  \emph{coercive}. The
operator $\A \in \L(V,V')$ associated with $\fra$ is defined by
\[
\langle \A u, v \rangle = \fra(u,v) \quad (u,v \in V). 
\]
Here  $V'$ denotes  the antidual of $V$ when  $\K =\C$ and the dual when $\K =\R$. The duality 
between $V'$ and $V$ is denoted by $\langle ., . \rangle$. 
As usual, we identify $H$ with a dense subspace of $V'$ (associating to $f \in H$ the antilinear form $v \mapsto ( f \mid v )_H$).
This embedding is continuous, in fact
\begin{equation*}
	\norm{f}_{V'} \le c_H \norm{f}_H \quad (u \in H),
\end{equation*}
with the same constant $c_H$ as in \eqref{eq:V_dense_in_H}.

Seen as an unbounded operator on $V'$ with domain $D(\A) = V$ the
operator $- \A$ generates a holomorphic semigroup on $V'$. In the
case where $\K=\R$ we mean by this that the $\C$-linear extension
of $-\A$ on the complexification of $V'$ generates a holomorphic
$C_0$-semigroup. The semigroup is bounded on a  sector if $\omega
=0$, in which case $\A$ is an isomorphism. Denote by $A$ the part
of $\A$ on $H$; i.e.,\
\begin{align*}
    D(A) := {}& \{ u\in V : \A u \in H \}\\
    A u = {}& \A u.
\end{align*}
Then $-A$ generates a holomorphic $C_0$-semigroup on $H$ (the
restriction of the semigroup generated by $-\A$ to $H$). For all this, see e.g.\ the monographs 
\cite[Chap.\ 1]{Ouh05}, \cite[Chap.\ 2]{Tan79}. 

For the remainder of this section we assume that $\fra \colon V \times V \to \K$ is a sesquilinear form satisfying \eqref{eq:V_dense_in_H}, \eqref{eq:a_continuous} and \eqref{eq:H-elliptic} with $\omega=0$ and $\A$ is the associated operator of $\fra$.
We show some estimates for the resolvent of the operator $\A$.
\begin{proposition}\label{prop:bounds}
    For $\lambda \ge 0$ we have
    \begin{enumerate}
	\item[{\rm a)}] $\norm{(\lambda + \A)^{-1}}_{\L(V',V)} \le 1/\alpha$,
        \item[{\rm b)}] $\norm{(\lambda + \A)^{-1}}_{\L(V',H)} \le (\frac\alpha {2 c_H} + \sqrt{\lambda} \sqrt{2 \alpha })^{-1}$,
        \item[{\rm c)}] $\norm{(\lambda + \A)^{-1}}_{\L(H,V)} \le (\frac\alpha {2 c_H} + \sqrt{\lambda} \sqrt{2 \alpha })^{-1}$,
        \item[{\rm d)}] $\norm{(\lambda + \A)^{-1}}_{\L(H)} \le (\frac\alpha{c_H^2} + \lambda )^{-1}$,
        \item[{\rm e)}] $\norm{(\lambda + \A)^{-1}}_{\L(V')} \le (\frac\alpha {2c_H^2} + \lambda \frac \alpha{2(\alpha+M)})^{-1}$, and finally
        \item[{\rm f)}] 	$\norm{(\lambda + \A)^{-1}}_{\L(V)} \le \frac M \alpha(\frac\alpha {2c_H^2} + \lambda \frac \alpha{2(\alpha+M)})^{-1}$.
    \end{enumerate}
\end{proposition}
\begin{proof}
Let $u \in V$ and $\lambda \ge 0$, then
\begin{equation}\label{eq:c_est}
	\langle (\A+\lambda) u, u \rangle = \fra(u,u) + \lambda \norm{u}_H^2 \ge \alpha \norm{u}_V^2 + \lambda \norm{u}_H^2.
\end{equation}

By \eqref{eq:c_est} we have
\[
	\norm{(\A+\lambda)u}_{V'} \norm{u}_V \ge \alpha \norm{u}^2_V.
\]
Dividing by $\norm{u}_V$ shows a).

By \eqref{eq:c_est} and the inequality $a^2+b^2 \ge 2ab$, $a,b\in\R$ we have
\begin{align*}
	\norm{(\A+\lambda)u}_{V'} \norm{u}_V &\ge \alpha \norm{u}^2_V+ \lambda \norm{u}_H^2\\
		&\ge \tfrac \alpha 2 \norm{u}_V^2 + 2\sqrt{\tfrac \alpha 2}\norm{u}_V \sqrt{\lambda}\norm{u}_H\\
		&\ge \tfrac \alpha {2 c_H} \norm{u}_V \norm{u}_H + \sqrt{\lambda} \sqrt{2 \alpha }\norm{u}_V \norm{u}_H.
\end{align*}
Dividing by $\norm{u}_V$ shows b).

If $u \in D(A)$ we obtain similarly 
\[
	\norm{(\A+\lambda)u}_{H} \norm{u}_H
		\ge \tfrac \alpha {2 c_H} \norm{u}_V \norm{u}_H + \sqrt{\lambda} \sqrt{2 \alpha }\norm{u}_V \norm{u}_H.
\]
Dividing by $\norm{u}_H$ shows c).

For $u \in D(A)$ we obtain by \eqref{eq:c_est} that
\begin{align*}
	\norm{(\A+\lambda)u}_{H} \norm{u}_H &\ge \alpha \norm{u}^2_V+ \lambda \norm{u}_H^2\\
		&\ge \frac\alpha{c_H^2} \norm{u}^2_H+ \lambda \norm{u}_H^2.
\end{align*}
Dividing by $\norm{u}_H$ shows d).

Let $u,v \in V$, $\norm{v}_V=1$. For $0<\mu<1$ we set
\[
	w:= \mu \frac{u}{\norm{u}_V}+ (1-\mu)v,
\]
then $\norm{w}_V \le \mu + (1-\mu) = 1$. Thus
\begin{align*}
	\norm{(\A+\lambda)u}_{V'}  &\ge \Re \fra(u,w) + \Re( \lambda (u \mid w)_H )\\
		&=\frac \mu{\norm{u}_V} \Re\fra(u,u) + (1-\mu)\Re \fra(u,v)\\
			&\quad + \lambda \frac \mu{\norm{u}_V} \norm{u}_H^2 + \lambda (1-\mu)  \Re(u \mid v)_H\\
		&\ge  \mu \alpha \norm{u}_V - (1-\mu)M\norm{u}_V + \lambda (1-\mu)  \Re(u \mid v)_H.
\end{align*}
If we choose $\mu =(\frac{\alpha}2 +M) / (\alpha+M)$ and take the supremum over $v\in V$ with $\norm{v}_V=1$, we obtain
\begin{align*}
	\norm{(\A+\lambda)u}_{V'} &\ge \frac\alpha 2 \norm{u}_V + \lambda \frac \alpha{2(\alpha+M)}  \norm{u}_{V'}\\
		&\ge \frac\alpha {2c_H^2} \norm{u}_{V'} + \lambda \frac \alpha{2(\alpha+M)}  \norm{u}_{V'}.
\end{align*}
This proves e).

Finally 
\[
	\norm{(\lambda+\A)^{-1}}_{\L(V)} \le \norm{\A^{-1}}_{\L(V',V)}  \norm{(\lambda+\A)^{-1}}_{\L(V')} \norm{\A}_{\L(V,V')}.
\]
This shows f).
\end{proof}

Next we define the operator $\A^{-1/2} \in \L(V')$ by
\[
    \A^{-1/2} \varphi := \frac 1 \pi \int_0^\infty \lambda^{-1/2} (\lambda+\A)^{-1} \varphi \ \d \lambda \quad (\varphi\in V'),
\]
see \cite[(3.52)]{ABHN11} or \cite[Sec.\ 2.6 p.\ 69]{Paz83}.
Then $(\A^{-1/2})^2 =  \A^{-1}$.
Moreover, $\A^{-1/2}$ is injective.
One defines $\A^{1/2}$ by $D(\A^{1/2}) = R(\A^{-1/2})$, $\A^{1/2}= (\A^{-1/2})^{-1}$.
Then $-\A^{1/2}$ is a closed operator on $V'$ (in fact, the generator of a holomorphic $C_0$-semigroup).
Denoting as before the part of $\A$ in $H$ by $A$.
Then $A$ is invertible and $A^{-1/2}f = \A^{-1/2}f$ for $f \in H$.
Then $A^{-1/2}$ is injective and $D(A^{1/2}) = R(A^{-1/2})$, $A^{1/2}= (A^{-1/2})^{-1}$.
It can happen that $R(A^{-1/2}) \neq V$.
The following is easy to see using that $(\A^{-1/2})^2 = \A^{-1}$ is an isomorphism from $V'$ onto $V$.
\begin{proposition}
	The following are equivalent
	\begin{enumerate}[]
		\item[{\rm(i)}] $R(A^{-1/2})=V$,
		\item[{\rm(ii)}] $R(\A^{-1/2})=H$.
	\end{enumerate}
\end{proposition}
We say that the form $\fra$ has the \emph{square root property} if these two equivalent conditions are satisfied.
In that case $\A^{-1/2}$ is an isomorphism from $V'$ onto $H$ with inverse $\A^{1/2}$
and $A^{-1/2}$ is an isomorphism from $H$ onto $V$ with inverse $A^{1/2}$.
Moreover, $A^{1/2}$ is the part of $\A^{1/2}$ in $H$.

\begin{remark}
	It is known that the square root property is equivalent to $D(A^{1/2*})=V$.
	Thus if the form $\fra$ satisfies the square root property it is given by 
	\[
		\fra(u,v) = (A^{1/2}u \mid  A^{1/2*}v)_H \quad (u,v \in V).
	\]
\end{remark}

If $\fra$ is symmetric, or more generally if $\fra = \fra_1 + \fra_2$, 
where $\fra_1\colon V\times V \to \K$ is symmetric, continuous and coercive and $\fra_2 \colon V \times H \to \K$ is continuous,
then $\fra$ satisfies the square root property.
More generally if $\{\fra(u) : u \in V, \norm{u}_H = 1\}$ lies in a parabola, then the square root property is satisfied (c.f.\ \cite{McI82}).
Finally if $\Omega \subset \R^d$ is a Lipschitz domain and $V=H^1(\Omega)$ or $H^1_0(\Omega)$ and 
\[
	\fra(u,v) = \int_\Omega \sum_{j,k} a_{jk} \partial_j u \overline{\partial_k v} \ \d{x}
\]
where $a_{jk} \in L^\infty(\Omega)$ are real coefficients satisfying
\[
	\sum_{j,k} a_{jk} \xi_j \xi_k \ge \alpha \abs\xi^2 \quad (\xi \in \R^d)
\]
a.e.\ where $\alpha >0$, then $\fra$ has the square root property.
This is a version of the solution of the famous Kato square root problem (see \cite{AT03}).

The square root property implies that there exists a constant $\gamma >0$ such that
\begin{equation}\label{eq:sqrt_est_H}
	\gamma \norm{u}_V \le \norm{A^{1/2}u}_H \le \frac 1 \gamma \norm{u}_V
\end{equation}
for all $u \in V$. As a consequence
\begin{equation}\label{eq:sqrt_est_V'}
	\alpha \gamma \norm{f}_H \le \norm{\A^{1/2}f}_{V'} \le \frac M \gamma \norm{f}_H \quad (f \in H).
\end{equation}
\begin{proof}
	Let $f \in H$. Then $\A^{1/2}f \in V'$ and 
	\[
		\norm{\A^{1/2}f}_{V'} = \norm{\A\A^{-1/2}f}_{V'}\le M \norm{A^{-1/2}f}_{V} \le \frac M \gamma \norm{f}_{H}
	\]
	by \eqref{eq:sqrt_est_H} which shows the second inequality.
	Moreover
	\begin{multline*}
		\norm{f}_{H} \le \frac 1 \gamma \norm{A^{-1/2}f}_{V} = \frac 1 \gamma \norm{\A^{-1}\A A^{-1/2}f}_{V}\\
			\le \frac 1 {\alpha\gamma} \norm{\A A^{-1/2}f}_{V'} = \frac 1 {\alpha\gamma} \norm{\A^{1/2}f}_{V'},
	\end{multline*}
	which is the first inequality.
\end{proof}
		
	The constants in Proposition~\ref{prop:bounds} only depend on the continuity constant $M$, the coerciveness constant $\alpha$ of the form $\fra$
	and the norm $c_H$ of the embedding of $V$ into $H$.
	This is in general not true for the constant $\gamma$ in \eqref{eq:sqrt_est_H}. And indeed there are forms which do not have the square root property.
	However, if we ask for further properties of the form $\fra$, a universal constant $\gamma >0$ can be found.
	For example, if $\fra$ is symmetric, then actually
	\[
		\sqrt\alpha \norm{u}_V \le \norm{A^{1/2}u}_H \le \sqrt M \norm{u}_V \quad (u\in V).
	\]

\section{An isomorphism for $\MR$ spaces}\label{section:continuous_functions}

In this section we show that multiplication by $\A(.)^{1/2}$  defines an isomorphism from  $\MR_\fra(H)$ onto $\MR(V,V')$  (Theorem~\ref{thm:iso}). This will be our main tool in the next section and has interesting consequences by itself (Corollary~\ref{cor:embedding_in_continuous_functions}).
Let $V,H$ be separable Hilbert spaces
over $\K = \R$ or $\C$ such that $V \underset d \hookrightarrow
H$. Let $T>0$ and 
\[
    \fra\colon [0,T]\times V \times V \to \K
\]
be a function such that $\fra(t,.,.)\colon V \times V\to \K$ is sesquilinear for all $t \in [0,T]$.
We assume that $\fra$ is $V$-bounded, and coercive (i.e. quasi-corecive with $\omega = 0$), see Introduction. 
In addition we suppose that $\fra$ is \emph{Lipschitz continuous}; i.e.,  
\[
    \abs{\fra(t,u,v)- \fra(s,u,v)} \le \dot M \abs{t-s} \norm u_V \norm v_V \quad (t,s \in [0,T], u,v \in V).
\]

\begin{remark}
    It follows from the Uniform Boundedness Principle that $\fra$
    is Lipschitz continuous whenever $\fra(., u, v) \colon [0,T] \to \K$
    is Lipschitz continuous for all $u,v \in V$.
\end{remark}

We denote by $\A(t) \in \L(V,V')$ the operator associated with
$\fra(t,.,.)$. 
Further we assume that the non-autonomous form $\fra$ has the \emph{square root property}, by which we mean the following
\begin{enumerate}
\item[a)] each form $\fra(t,.,.)$ has the square root property $(t\in [0,T])$ and
\item[b)] there exists a constant $\gamma>0$ such that
\[
	\gamma \norm{v}_V \le \norm{\A(t)^{1/2}v}_H \le \frac 1 \gamma \norm{v}_V \quad (t \in [0,T], v\in V).
\]
\end{enumerate}
In the following we let 
\[
	\A^{1/2}(t) = (\A(t))^{1/2},\quad \A^{-1/2}(t) = (\A(t))^{-1/2} \quad (t \in [0,T]).
\]
Thus $\A^{1/2}$ is a mapping from $[0,T]$ into $\L(H;V')$ and $\A^{-1/2}$ from $[0,T]$ into $\L(V',H)$.
We will also consider these mappings with values in different spaces (such as $\L(V,V')$ in the first case) without changing the notation.
We consider the following maximal regularity
space
\[
    \MR_\fra(H) := \{ u \in H^1(0,T;H) \cap L^2(0,T;V) : \A(.)u(.) \in L^2(0,T;H) \}.
\]
It is a Hilbert space for the norm $ \norm . _{\MR_\fra(H)}$ given by 
\[
    \norm u _{\MR_\fra(H)}^2 := \norm u ^2_{ L^2(0,T;V)} + \norm{\dot u} ^2_{ L^2(0,T;H)} + \norm {\A(.)u(.)} ^2_{ L^2(0,T;H)}.
\]
Under the above assumptions on the form $\fra$ our main result of this section says the following.

\begin{theorem}\label{thm:iso}
	The mapping $u \mapsto \A^{1/2} u$
	defines an isomorphism from $\MR_\fra(H)$ onto $\MR(V,V')$.
	Moreover for $u \in \MR_\fra(H)$ 
	\begin{equation}\label{eq:chain_rule_for_Au}
        (\A^{1/2}(.) u(.))\dot{} =  (\A^{1/2})\dot{}(.) u(.) + \A^{1/2}(.) \dot u(.)
    \end{equation}
    in $L^2(0,T;V')$ and for $v \in \MR(V,V')$ 
    \begin{equation}\label{eq:chain_rule_for_v}
        (\A^{-1/2}(.) v(.))\dot{} = (\A^{-1/2})\dot{}(.) v(.) + \A^{-1/2}(.) \dot v(.)
    \end{equation}
    in $L^2(0,T;H)$.
\end{theorem}
The proof depends on several lemmas which we show below.
The first, Lemma~\ref{lem:square_root_lipschitz_continuous} b) shows that
$\A^{1/2} \colon  [0,T]\to \L(V,V')$ is Lipschitz continuous. 
Thus by Proposition \ref{prop:lipschitz_continuous_operators}
b) there exists $(\A^{1/2})\dot{}\, \colon [0,T]\to \L(V,V')$, which is strongly measurable and bounded. 
Thus for $u \in L^2(0,T;V)$, $(\A^{1/2})\dot{} (.) u(.) \in L^2(0,T;V')$, which explains
that the first term on the right hand side of $\eqref{eq:chain_rule_for_Au}$ is well-defined. 
Concerning the second term, we will see in Lemma~\ref{lem:square_root_strongly_continuous} that $\A^{1/2} \colon [0,T]\to \L(H,V')$ 
is strongly measurable and bounded. 
Thus, for $u\in H^1(0,T;H)$ one has $\A^{1/2} (.) \dot u(.) \in
L^2(0,T;V')$. Thus the right hand side of
$\eqref{eq:chain_rule_for_Au}$ is indeed in $L^2(0,T;V')$.
For similar reasons also \eqref{eq:chain_rule_for_v} is well defined.

\begin{corollary}\label{cor:embedding_in_continuous_functions}
    The space $\MR_\fra(H)$ is continuously embedded into $C([0,T];V)$.
    Moreover, if the embedding $V \hookrightarrow H$ is compact, 
    then also the embedding of $\MR_\fra(H)$ into $L^2(0,T;V)$ is compact.
\end{corollary}

For the proof of Theorem~\ref{thm:iso} and Corollary~\ref{cor:embedding_in_continuous_functions} we need several
auxiliary results.

\begin{lemma}\label{lem:square_root_lipschitz_continuous}
    The mappings
    \begin{enumerate}[label={\rm \alph*)}]
        \item $\A^{-1/2}\colon [0,T] \to \L(V)$,
        \item $\A^{1/2}\colon [0,T] \to \L(V,V')$ and
        \item $\A^{-1/2}\colon [0,T] \to \L(H)$
    \end{enumerate}
    are Lipschitz continuous.
\end{lemma}
\begin{proof}
    a) Let $u\in V$. We have 
    \begin{align*}
     \A^{-1/2}(t)u - \A^{-1/2}(s)u 
            &=  \frac 1 \pi \int_0^\infty \lambda^{-1/2} \big[(\lambda+\A(t))^{-1} - (\lambda+\A(s))^{-1} \big] u \ \d \lambda \\
           & = \frac 1 \pi \int_0^\infty \lambda^{-1/2} (\lambda+\A(t))^{-1}( \A(s)-\A(t) ) (\lambda+\A(s))^{-1} u \ \d \lambda.
            \end{align*}
   It follows from   Proposition \ref{prop:bounds} a) and f) that 
            \begin{align*}
        \norm[\big]{\A^{-1/2}&(t)u - \A^{-1/2}(s)u}_V\\
            &\le \frac 1 \alpha \int_0^\infty \lambda^{-1/2} \norm*{ ( \A(s)-\A(t) ) (\lambda+\A(s))^{-1} u }_{V'} \ \d \lambda\\
            &\le \frac{\dot M} \alpha \abs{s-t} \int_0^\infty \lambda^{-1/2} \norm*{ (\lambda+\A(s))^{-1} u }_V \ \d \lambda\\
            &\le \const  \abs{s-t} \int_0^\infty \lambda^{-1/2} (\lambda+1)^{-1} \ \d \lambda \ \norm{ u }_V.
    \end{align*}
    b) Let $u \in V$. Then,
    \begin{align*}
        \norm[\big]{\A^{1/2}&(t)u - \A^{1/2}(s)u}_{V'}\\
            &=\norm[\big]{\A(t) \A^{-1/2}(t)u - \A(s) \A^{-1/2}(s)u}_{V'}\\
            &\le \norm[\big]{( \A(t) - \A(s) ) \A^{-1/2}(t)u}_{V'} + \norm[\big]{\A(s) ( \A^{-1/2}(t)u - \A^{-1/2}(s)u )}_{V'}\\
            &\le \abs{t-s} \dot M \norm[\big]{ \A^{-1/2}(t)u }_V + M \norm[\big]{\A^{-1/2}(t)u - \A^{-1/2}(s)u}_V\\
            &\le \abs{t-s} \const \norm{u}_V, 
    \end{align*}
    where we have used part a) above in the last inequality.
    
    The proof of c) is similar to a) using Proposition \ref{prop:bounds} c) and b) instead of a) and f).
\end{proof}

\begin{lemma}\label{lem:square_root_strongly_continuous}
    The mappings
    \begin{enumerate}[label={\rm \alph*)}]
        \item $\A^{-1/2}\colon [0,T] \to \L(H,V)$,
        \item $\A^{1/2}\colon [0,T] \to \L(H,V')$,
        \item $\A^{-1/2}\colon [0,T] \to \L(V',H)$ and
        \item $\A^{1/2}\colon [0,T] \to \L(V,H)$
    \end{enumerate}
    are strongly continuous.
\end{lemma}
\begin{proof}
    a) By the square root property we have
        \[
            \norm[\big]{\A^{-1/2}(t) u}_V \le \tfrac 1 \gamma \norm u_H \quad (u \in H, t \in [0,T]).
        \]
        Since by Lemma \ref{lem:square_root_lipschitz_continuous} a)
        $\A^{-1/2}(.)u \colon [0,T] \to V$ is continuous for $u \in V$,
        the claim follows by a $3\epsilon$-argument.
            
    b) Let $u \in V$. Since $\A^{1/2}(.)u = \A(.) \A^{-1/2}(.)u$ b) follows by a) and the Lipschitz continuity of $\A\colon [0,T] \to \L(V,V')$.
    
        c) By the square root property and \eqref{eq:sqrt_est_V'} we have
        \[
            \norm[\big]{\A^{-1/2}(t) u}_H \le \tfrac 1 {\alpha\gamma} \norm u_{V'} \quad (u \in V', t \in [0,T]).
        \]
        Since by Lemma \ref{lem:square_root_lipschitz_continuous} c)
        $\A^{-1/2}(.)u \colon [0,T] \to H$ is continuous for $u \in H$,
        the claim follows by a $3\epsilon$-argument.
            
    d) Let $u \in H$. Since $\A^{1/2}(.)u = \A^{-1/2}(.)\A(.)u$ d) follows by c) and the Lipschitz continuity of $\A\colon [0,T] \to \L(V,V')$.
\end{proof}

Next we consider the Hilbert space
\[
    \MR(V,H) := L^2(0,T;V) \cap H^1(0,T;H)
\]
with norm
\[
    \norm{u}_{\MR(V,H)}^2 := \norm u_{L^2(0,T;V)}^2 + \norm{\dot u}_{L^2(0,T;H)}^2.
\]
Similarly, we define the Hilbert space
\[
    \MR(H,V') = L^2(0,T;H) \cap H^1(0,T;V')
\]
with norm
\[
    \norm u_{\MR(H,V')}^2 = \norm u_{L^2(0,T;H)}^2 + \norm{\dot u}_{L^2(0,T;V')}^2.
\]

\begin{proof}[Proof of Theorem \ref{thm:iso}]
	If follows from Lemma~\ref{lem:square_root_strongly_continuous} that $\Phi \colon u(.) \mapsto \A^{1/2}(.) u(.)$ defines an isomorphism from
	$L^2(0,T;H)$ onto $L^2(0,T;V')$. Thus it suffices to show that $\Phi \MR_\fra(H) = \MR(V,V')$.
	
	a) Let $u \in \MR_\fra(H)$. Then $u \in \MR(V,H)$.
	Since $\A^{1/2} \colon [0,T]\to \L(H,V')$ is strongly measurable and bounded and Lipschitz continuous with values in $\L(V,V')$, 
	it follows from Proposition~\ref{prop:Lip_MR} that $\A^{1/2}u \in H^1(0,T;V')$. 
	Since by assumption $\A u \in L^2(0,T;H)$ and $\A^{-1/2}\colon [0,T]\to \L(H,V)$ is strongly measurable and bounded, 
	it follows that $\A^{1/2}u = \A^{-1/2} \A u \in L^2(0,T;V)$.
	
	b) Conversely, let $u \in \MR(V,V')$. 
	We have to show that $\A^{-1/2}u \in \MR_\fra(H)$.
	Observe that $u \in \MR(H,V')$.
	Since $\A^{-1/2} \colon [0,T]\to \L(V',H)$ is strongly measurable and bounded and Lipschitz continuous with values in $\L(H)$, 
	it follows from Proposition~\ref{prop:Lip_MR} that $\A^{-1/2}u \in H^1(0,T;H)$. 
	Moreover, $\A \A^{-1/2} u = \A^{1/2}u \in L^2(0,T;H)$ since
	$\A^{1/2} \colon [0,T] \to \L(V,H)$ is strongly measurable and bounded.
\end{proof}

Now we are in the position to  prove Corollary
\ref{cor:embedding_in_continuous_functions}.

\begin{proof}[Proof of Corollary \ref{cor:embedding_in_continuous_functions}]
    Let $u \in \MR_\fra(H)$. Then $\A^{1/2}u \in \MR(V,V')$ by Theorem~\ref{thm:iso}.
    Using this and the classical continuity result $\eqref{eq:embedding_in_continuous_functions}$ it follows  that
    $\A^{1/2}(.)u(.) \in C([0,T];H).$
    Now Lemma \ref{lem:square_root_strongly_continuous} a) implies that
    \[
        u = \A^{-1/2}(.) \A^{1/2}(.) u(.) \in C([0,T];V)
    \]
    which is the first assertion of Corollary \ref{cor:embedding_in_continuous_functions}.
    
    Since by the theorem of Aubin-Lions \cite[p.\ 106]{Sho97} the embedding of $\MR(V,V')$ into $L^2(0,T;H)$ is compact if $V \hookrightarrow H$ is compact,
    it follows from Theorem~\ref{thm:iso} and Lemma~\ref{lem:square_root_strongly_continuous} a) that the embedding of $\MR_\fra(H)$ into $L^2(0,T;V)$ is compact.
\end{proof}

\section{Well-posedness in H}\label{section:well-posedness_in_H}

Let $V,H$ be separable Hilbert spaces such that $V \underset d
\hookrightarrow H$ and let 
\[
    \fra\colon [0,T] \times V \times V \to \K
\]
be a form on which we impose the following conditions. 
It can be written as the sum of two non-autonomous forms
\[
    \fra(t,u,v)= \fra_1(t,u,v)+ \fra_2(t,u,v) \quad (t \in [0,T], u,v \in V)
\]
where
\[
    \fra_1\colon [0,T] \times V \times V \to \K
\]
satisfies the assumptions considered in Section~\ref{section:continuous_functions}; i.e.,
\begin{enumerate}[label={\alph{*})}]
    \item $\abs{\fra_1(t,u,v)} \le M_1 \norm u_V \norm v_V$ for all $u,v \in V$, $t \in [0,T]$;
    \item $\Re \fra_1(t,u,u) \ge \alpha \norm u _V^2$ for all $u \in V$, $t \in [0,T]$ with $\alpha > 0$;
    \item $\fra_1$ satisfies the square root property.
    \item $\fra_1$ is Lipschitz-continuous; i.e., there exists a constant $\dot M_1$
    	\[
			\abs{\fra_1(t,u,v) - \fra_1(s,u,v)} \le \dot M_1 \abs{t-s} \norm u_V \norm v_V
    	\]
    	for all $u,v \in V,$ $s,t \in [0,T]$,
\end{enumerate}
and
\[
    \fra_2\colon [0,T] \times V \times H\to \K
\]
satisfies
\begin{enumerate}[label={(\alph*)}]
    \item[e)] $\abs{\fra_2(t,u,v)} \le M_2 \norm u_V \norm v_H$ for all $u\in V,\  v\in H$, $t \in [0,T]$,
	\item[f)] $\fra_2(.,u,v)\colon [0,T]  \to\K$ is measurable for all $u,v \in V$.
\end{enumerate}
We denote by $\A(t)$ the operator given by $\langle \A(t) u, v \rangle = a(t,u,v)$ and by $\A_1(t)$ the operator given by $\langle \A_1(t) u, v \rangle = a_1(t,u,v)$.
Further we denote by $A(t)$ and $A_1(t)$ the part of $\A(t)$ and $\A_1(t)$ in $H$, respectively.
Finally we set $\A_2(t) := \A(t)-\A_1(t)$. Thus $\langle \A_2(t) u, v \rangle = a_2(t,u,v)$ and by e) and f) $\A_2$ defines a strongly measurable and bounded mapping form $[0,T]$ to $\L(V,H)$.

Let $B\colon[0,T] \to \L(H)$ be a strongly measurable function satisfying
\begin{enumerate}
	\item[g)] $\Re \langle \A^{1/2}_1 B(t) \A^{1/2}_1 u , u \rangle \ge \beta_0 \Re \fra_1(u,u)$ for all $u \in V$, where $\beta_0>0$ and
	\item[h)] $\norm{B(t)}_{\L(H)} \le \beta_1$
\end{enumerate}
for all $t\in [0,T]$.
\begin{remark}
	\begin{enumerate}
		\item[1.)] If $\fra_1$ is symmetric then g) is satisfied if and only if
			$\Re (B(t) g \mid g)_H \ge \beta_0 \norm g_H^2$ for all $g \in H$ and all $t \in [0,T]$.
		\item[2.)] In the general case, $B(t) = \beta(t) I$ with $\beta:[0,T] \to [\beta_0,\beta_1]$ a measurable function and $0<\beta_0<\beta_1$ satisfies both conditions.
\end{enumerate}
\end{remark}

Now we state  our  results  on existence and uniqueness.

\begin{theorem}\label{thm:well-posedness_in_H}
    Let $u_0 \in V$, $f \in L^2(0,T;H)$.
    Then there exists a unique
        $u \in \MR_\fra(H)$
    satisfying
    \begin{equation*}
    \left\{\begin{aligned}
       \dot u(t) +  B(t) \A(t)u(t)  &= f(t) \quad \text{a.e.}\\
                        u(0)    &=u_0.
	\end{aligned}\right.
    \end{equation*}
    Moreover, $u \in C([0,T];V)$ and
    \begin{equation}\label{eq:MR_estimate}
    	\norm u_{\MR_\fra(H)} \le c_0 \Big[ \norm{u_0}_V + \norm f_{L^2(0,T;H)} \Big],
    \end{equation}
    where the constant $c_0$ depends merely on $\beta_0, \beta_1, M_1, M_2, \alpha, T$, $\dot M_1$ and $\gamma$.
\end{theorem}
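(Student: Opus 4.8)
The plan is to produce the solution by a Galerkin scheme set in $V$, the decisive point being an a priori bound in $H^1(0,T;H)\cap L^\infty(0,T;V)$ obtained by testing the equation against $\dot u$; uniqueness and the regularity $u\in C([0,T];V)$ then follow from the same energy computation combined with Theorem~\ref{thm:embedding_in_continuous_functions}. Concretely, fix an increasing sequence of finite-dimensional subspaces $F_n\subset V$ whose union is dense in $V$, let $u_{0,n}$ be the $V$-orthogonal projection of $u_0$ onto $F_n$ (so $\norm{u_{0,n}}_V\le\norm{u_0}_V$ and $u_{0,n}\to u_0$ in $V$), and look for $u_n\in H^1(0,T;F_n)$ with $u_n(0)=u_{0,n}$ and $(B(t)\dot u_n(t)\mid v)_H+\fra(t,u_n(t),v)=(f(t)\mid v)_H$ for every $v\in F_n$ and a.e.\ $t$. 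In a basis of $F_n$ this becomes a linear system $G_n(t)\dot c(t)=g_n(t)-K_n(t)c(t)$; the matrix $G_n(t)$, built from $(B(t)\cdot\mid\cdot)_H$ on $F_n$, is invertible with measurable, uniformly bounded inverse because $\Re(B(t)g\mid g)_H\ge\beta_0\norm g_H^2$, while $K_n\in L^\infty$ and $g_n\in L^2$, so Carathéodory's theorem provides a unique absolutely continuous solution on $[0,T]$, with $\dot c\in L^2$.

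For the a priori bound I test the Galerkin equation with $\dot u_n(t)\in F_n$ and take real parts, obtaining $\Re(B(t)\dot u_n\mid\dot u_n)_H+\Re\fra_1(t,u_n,\dot u_n)+\Re\fra_2(t,u_n,\dot u_n)=\Re(f\mid\dot u_n)_H$. Since $\fra_1$ is symmetric and Lipschitz, $t\mapsto\fra_1(t,u_n(t),u_n(t))=\sum_{j,k}c_j(t)\overline{c_k(t)}\,\fra_1(t,e_j,e_k)$ belongs to $W^{1,1}(0,T)$ with $(\fra_1(\cdot,u_n,u_n))\dot{}=2\Re\fra_1(\cdot,u_n,\dot u_n)+\dot\fra_1(\cdot,u_n,u_n)$ and $\abs{\dot\fra_1(t,u_n,u_n)}\le\dot M_1\norm{u_n}_V^2$. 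Substituting, using $\Re(B(t)g\mid g)_H\ge\beta_0\norm g_H^2$, the bound $\abs{\fra_2(t,u_n,\dot u_n)}\le M_2\norm{u_n}_V\norm{\dot u_n}_H$, Young's inequality to absorb the terms in $\norm{\dot u_n}_H$, and the coerciveness $\alpha\norm{u_n}_V^2\le\fra_1(t,u_n,u_n)$, I arrive at a differential inequality $\tfrac{\beta_0}{2}\norm{\dot u_n}_H^2+\tfrac12\phi_n'(t)\le c\,\phi_n(t)+\tfrac1{\beta_0}\norm{f(t)}_H^2$ a.e., where $\phi_n:=\fra_1(\cdot,u_n,u_n)\ge0$ and $c$ depends only on $\alpha,\dot M_1,M_2,\beta_0$. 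Since $\phi_n(0)\le M_1\norm{u_0}_V^2$, Gronwall controls $\sup_t\phi_n(t)$ and hence $\sup_t\norm{u_n(t)}_V$, and integrating in $t$ then controls $\norm{\dot u_n}_{L^2(0,T;H)}$; altogether $\norm{u_n}_{\MR(V,H)}\le C\big(\norm{u_0}_V+\norm f_{L^2(0,T;H)}\big)$ with $C$ depending only on $\beta_0,M_1,M_2,\alpha,T,\dot M_1$, as claimed.

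Extracting a subsequence $u_{n_k}\rightharpoonup u$ weakly in $\MR(V,H)$, one passes to the limit in the Galerkin identity: writing $(B\dot u_n\mid\varphi)_H=(\dot u_n\mid B^*\varphi)_H$ (using $\norm{B(t)}_{\L(H)}\le\beta_1$) and using density of $\bigcup_nF_n$ in $V$ shows that $(B(t)\dot u(t)\mid v)_H+\fra(t,u(t),v)=(f(t)\mid v)_H$ for all $v\in V$ and a.e.\ $t$. As $B(t)\dot u(t)$ and $f(t)$ lie in $H$, this forces $u(t)\in D(A(t))$, the equation to hold in $H$, and $\A(\cdot)u(\cdot)=f-B\dot u\in L^2(0,T;H)$, i.e.\ $u\in\MR_\fra(H)$; the initial condition $u(0)=u_0$ follows from $\MR(V,H)\hookrightarrow C([0,T];H)$, which makes evaluation at $0$ weakly continuous, together with $u_{n_k}(0)\to u_0$, and $\eqref{eq:MR_estimate}$ survives the limit by weak lower semicontinuity of the norm. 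Since $\norm{\A_2(t)g}_H\le M_2\norm g_V$ gives $\A_2(\cdot)u(\cdot)\in L^2(0,T;H)$, we also obtain $\A_1(\cdot)u(\cdot)\in L^2(0,T;H)$, so $u\in\MR_{\fra_1}(H)$ and Theorem~\ref{thm:embedding_in_continuous_functions}, applied to the symmetric form $\fra_1$, yields $u\in C([0,T];V)$. For uniqueness, the difference $w$ of two solutions is in $\MR_{\fra_1}(H)$, so by Theorem~\ref{thm:embedding_in_continuous_functions} the map $\fra_1(\cdot,w,w)$ is in $W^{1,1}(0,T)$ with the product rule $\eqref{eq:derivative_of_form}$; testing $B\dot w+\A w=0$ against $\dot w$ and repeating the computation of the a priori estimate with $f=0$ and $w(0)=0$ gives $\fra_1(t,w(t),w(t))\equiv0$ by Gronwall, hence $w\equiv0$.

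The real obstacle is the energy estimate, precisely because it hinges on testing against $\dot u$: this is legitimate for the Galerkin approximants only thanks to their finite-dimensional, $\fra_1$-symmetric structure, which supplies the product rule for $t\mapsto\fra_1(t,u_n(t),u_n(t))$, and it is legitimate for the limit and for uniqueness only because Theorem~\ref{thm:embedding_in_continuous_functions} — the embedding $\MR_{\fra_1}(H)\hookrightarrow C([0,T];V)$ and its product rule $\eqref{eq:derivative_of_form}$ — supplies the corresponding identity for the solution itself. The symmetry of $\fra_1$ is indispensable here; once it is in place the perturbations $\fra_2$ and $B$, the inhomogeneity $f$, and the initial datum $u_0\in V$ enter only through the crude bounds above and are disposed of by Young's inequality, coerciveness, and Gronwall.
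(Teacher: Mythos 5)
Your argument is correct, but the existence part follows a genuinely different route from the paper. You construct the solution by a Faedo--Galerkin scheme: Carath\'eodory solvability of the finite-dimensional systems, an energy estimate obtained by testing with $\dot u_n$ (legitimate there because the product rule for $t\mapsto\fra_1(t,u_n(t),u_n(t))$ is elementary for finite-dimensional, Lipschitz-in-$t$ data), Gronwall, and then weak compactness in $\MR(V,H)$ to pass to the limit. The paper instead gets existence in one stroke from Lions' Representation Theorem (Section 3), applied to the form $E(u,w)=\int_0^T(B(t)\dot u(t)\mid\dot w(t))_H e^{-\gamma t}\,\d t+\int_0^T\fra(t,u(t),\dot w(t))e^{-\gamma t}\,\d t+\fra_1(0,u(0),w(0))$ on $\V\times\W$ with $\V=\{u\in H^1(0,T;H)\cap L^2(0,T;V):u(0)\in V\}$ and $\W=H^1(0,T;V)$; the coercivity of $E$ on $\W$ is exactly your energy computation, with the weight $e^{-\gamma t}$ playing the role of your Gronwall factor and the term $\fra_1(0,u(0),w(0))$ encoding the initial value, which is then recovered by testing with constant $w$. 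What the two proofs share --- and what both really hinge on --- is the symmetry and Lipschitz continuity of $\fra_1$ (product rule along $H^1(0,T;V)$-functions, respectively along your approximants) and Theorem~\ref{thm:embedding_in_continuous_functions}, which both proofs need to get $u\in C([0,T];V)$ and the estimate \eqref{eq:MR_estimate} for an \emph{arbitrary} solution, hence uniqueness; your uniqueness step coincides with parts c) and d) of the paper's proof. Your route buys independence from Lions' representation theorem at the price of the ODE/weak-limit machinery and the projection of $u_0$; the paper's route is shorter on the existence side and treats the initial datum directly through the form $E$. One small caveat, shared with the paper: your step $(B\dot u_n\mid\varphi)_H=(\dot u_n\mid B^*\varphi)_H$ ``using $\norm{B(t)}_{\L(H)}\le\beta_1$'' uses a uniform bound on $\norm{B(t)}_{\L(H)}$ that the stated hypothesis (a two-sided bound on $\Re(B(t)g\mid g)_H$ only) does not literally provide; the paper's proof implicitly needs the same bound (e.g.\ to see that $E(\cdot,w)\in\V'$), so this is not a defect of your argument relative to the paper's.
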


The fact that the solution $u$ is in $C([0,T];V)$ allows us to
relax the continuity condition on $\fra_1$ allowing a finite number
of jumps. We say that a non-autonomous form $\fra_1\colon [0,T]\times V
\times V \to \K$ is \emph{piecewise Lipschitz-continuous} if
there exist $0= t_0 < t_1 < \dots < t_n =b$ such that on each interval $(t_{i-1}, t_i)$ the form 
$\fra_1$ is the restriction of a 
Lipschitz-continuous form  on $[t_{i-1}, t_i ] \times V \times V$.  Then Theorem \ref{thm:well-posedness_in_H} remains
true.

\begin{corollary}\label{corW}
    Assume instead of {\rm d)} that $\fra_1$ is merely piecewise Lipschitz-continuous.
    Let $u_0 \in V$, $f \in L^2(0,T;H)$.
    Then there exists a unique
        $u \in \MR_\fra(H)$
    satisfying
    \begin{equation*}
    \left\{\begin{aligned}
       \dot u(t) +  B(t) \A(t)u(t)  &= f(t) \quad \text{a.e.}\\
                        u(0)    &=u_0.
	\end{aligned}\right.
    \end{equation*}
    Moreover, $u \in C([0,T];V)$.
\end{corollary}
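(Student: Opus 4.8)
The plan is to solve the problem successively on the subintervals on which $\fra_1$ is Lipschitz continuous and then to patch the pieces together, exploiting that each piece lands in $C(\cdot;V)$ so that the next piece is handed an admissible ($V$-valued) initial datum. First I would record that Theorem~\ref{thm:well-posedness_in_H} holds, with the same proof, on an arbitrary compact interval $[a,b]$ in place of $[0,T]$ — neither Section~\ref{section:well-posedness_in_H} nor the results of Section~\ref{section:continuous_functions} it relies on use that the interval starts at $0$; equivalently, one translates in time. Let $0 = t_0 < t_1 < \dots < t_n = T$ be as in the definition of piecewise Lipschitz continuity, let $\fra_1^{(i)}$ be a Lipschitz-continuous form on $[t_{i-1},t_i] \times V \times V$ restricting to $\fra_1$ on $(t_{i-1},t_i)$, and put $\fra^{(i)} := \fra_1^{(i)} + \fra_2$ on $[t_{i-1},t_i] \times V \times V$, with $\A^{(i)}(t)$ the associated operator. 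Then $\fra^{(i)}$ satisfies the hypotheses of Theorem~\ref{thm:well-posedness_in_H} on $[t_{i-1},t_i]$, and $\fra^{(i)} = \fra$, hence $\A^{(i)} = \A$, on $(t_{i-1},t_i)$, the difference at the two endpoints being irrelevant for an a.e.\ identity.

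Next I would proceed by induction on $i$. Set $w_0 := u_0 \in V$. Given $w_{i-1} \in V$, Theorem~\ref{thm:well-posedness_in_H} applied on $[t_{i-1},t_i]$ to the form $\fra^{(i)}$, the initial value $w_{i-1}$ and the datum $f|_{[t_{i-1},t_i]}$ produces a unique $u_i \in H^1(t_{i-1},t_i;H) \cap L^2(t_{i-1},t_i;V) \cap C([t_{i-1},t_i];V)$ with $B(t)\dot u_i(t) + \A^{(i)}(t)u_i(t) = f(t)$ a.e.\ on $(t_{i-1},t_i)$ and $u_i(t_{i-1}) = w_{i-1}$; since $\A^{(i)} = \A$ on $(t_{i-1},t_i)$, also $B\dot u_i + \A u_i = f$ a.e.\ there. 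Put $w_i := u_i(t_i) \in V$ and continue. Defining $u$ on $[0,T]$ by $u|_{[t_{i-1},t_i]} := u_i$ gives a well-defined function, continuous at each $t_i$ because $u_{i+1}(t_i) = w_i = u_i(t_i)$; thus $u \in C([0,T];V)$, $u \in L^2(0,T;V)$ since $\norm u_{L^2(0,T;V)}^2 = \sum_i \norm{u_i}_{L^2(t_{i-1},t_i;V)}^2$, and $u$ solves the equation a.e.\ on each $(t_{i-1},t_i)$, hence a.e.\ on $(0,T)$, with $u(0) = u_0$.

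It then remains to see $u \in H^1(0,T;H)$. Here I would let $g \in L^2(0,T;H)$ be the function equal to $\dot u_i$ on $(t_{i-1},t_i)$, and check, using $u_i(t) = u_i(t_{i-1}) + \int_{t_{i-1}}^t \dot u_i(s)\,\d s$ on each piece together with the matching $u_i(t_{i-1}) = u_{i-1}(t_{i-1})$, that $u(t) = u_0 + \int_0^t g(s)\,\d s$ for all $t \in [0,T]$; hence $u \in W^{1,1}(0,T;H)$ with $\dot u = g \in L^2(0,T;H)$, i.e.\ $u \in H^1(0,T;H)$. For uniqueness: any solution $v \in H^1(0,T;H) \cap L^2(0,T;V)$ of the problem restricts on $[0,t_1]$ to a solution of the problem with form $\fra^{(1)}$ (same initial value, and the same equation a.e.\ since $\A^{(1)} = \A$ on $(0,t_1)$), so $v|_{[0,t_1]} = u_1$ by the uniqueness part of Theorem~\ref{thm:well-posedness_in_H}; in particular $v(t_1) = w_1$, and iterating over the remaining subintervals gives $v = u$.

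The genuinely delicate point is the patching in the third paragraph: one must know that a function which is continuous on $[0,T]$ and piecewise $H^1(\cdot;H)$ is globally $H^1(0,T;H)$. Without the continuity across the $t_i$ — which is exactly what the $C(\cdot;V)$-conclusion of Theorem~\ref{thm:well-posedness_in_H} supplies, the value $u(t_i)$ being forced to agree from both sides — the global weak derivative would acquire Dirac masses at the junction points and the argument would fail. Everything else, namely the two Hilbert-space verifications and the observation that Theorem~\ref{thm:well-posedness_in_H} is insensitive to the choice of time interval, is routine.
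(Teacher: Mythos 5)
Your proposal is correct and follows essentially the same route as the paper: solve successively on the subintervals of Lipschitz continuity via Theorem~\ref{thm:well-posedness_in_H}, using the $C([t_{i-1},t_i];V)$ conclusion to hand each piece a $V$-valued initial value, and then glue. You merely spell out details the paper leaves implicit (time translation of the theorem, the verification that the glued function is globally in $H^1(0,T;H)$, and the interval-by-interval uniqueness induction), all of which are handled correctly.
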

\begin{proof}
    By Theorem \ref{thm:well-posedness_in_H} there is a solution
    $u_1 \in H^1(0,t_1;H)\cap L^2(0,t_1;V)$ on $(0,t_1)$ satisfying
    $u_1(0) = u_0$, and $u_1 \in C([0,t_1];V)$.
    Since $u_1(t_1) \in V$ we find a solution
    $u_2 \in H^1(t_1,t_2;H)\cap L^2(t_1,t_2;V) \cap C([t_1,t_2];V)$ with $u_2(t_1)=u_1(t_1)$.
    Solving successively we obtain solutions
    $u_i \in H^1(t_{i-1},t_i;H)\cap L^2(t_{i-1},t_i;V) \cap C([t_{i-1},t_i];V)$ with $u_i(t_{i-1})=u_{i-1}(t_{i-1})$ $i=1,\dots,n$.
    Letting $u(t) = u_i(t)$ for $t \in [t_{i-1},t_i)$ we obtain a solution.
    Uniqueness follows from uniqueness in Theorem \ref{thm:well-posedness_in_H}.
\end{proof}

\begin{lemma}\label{lem:diff_formula}
	\emph{(i)} Let $v \in \MR(V,V')$, then $(\A_1^{1/2})\dot{}\A_1^{-1/2}v= -\A_1^{1/2}(\A_1^{-1/2})\dot{} v$.
	
	\emph{(ii)} $\A_1^{1/2}(\A_1^{-1/2})\dot{}\,  \colon [0,T]\to\L(V,H)$ is strongly measurable and bounded.
\end{lemma}
\begin{proof}
	(i) Let $v \in \MR(V,V')$. By Theorem~\ref{thm:iso} we obtain that $\A_1^{-1/2} v \in \MR_\fra(H)$. Thus
	\begin{align*}
		\dot v &= (\A_1^{1/2}\A_1^{-1/2} v)\dot{}\\
			&=  (\A_1^{1/2})\dot{} \A_1^{-1/2} v +  \A_1^{1/2} (\A_1^{-1/2} v)\dot{} \\
			&=  (\A_1^{1/2})\dot{} \A_1^{-1/2} v +  \A_1^{1/2} (\A_1^{-1/2})\dot{} v +  \A_1^{1/2} \A_1^{-1/2} \dot v\\
			&=  (\A_1^{1/2})\dot{} \A_1^{-1/2} v +  \A_1^{1/2} (\A_1^{-1/2})\dot{} v + \dot v,
	\end{align*}
	where we have applied \eqref{eq:chain_rule_for_Au} in the second equality and \eqref{eq:chain_rule_for_v} in the third equality.
	Subtracting $\dot v$ on both sides proves the claim.
	
	(ii) $\A_1^{1/2} \colon [0,T] \to \L(V,H)$ and
		$(\A_1^{-1/2})\dot{} \colon [0,T] \to \L(V,V)$ are strongly measurable and bounded by Lemma~\ref{lem:square_root_strongly_continuous}, Lemma~\ref{lem:square_root_lipschitz_continuous} and Proposition~\ref{prop:Lip_MR}.
\end{proof}

Note that by Theorem~\ref{thm:iso} the operator $\A_1^{1/2}$ defines an isomorphism between $\MR_\fra(H)$ and $\MR(V,V')$. 
Recall also that $\A_2 \colon [0,T] \to \L(V,H)$  is strongly measurable and bounded.
\begin{proposition}~\label{prop:equiv_of_solutions}
	Let $u_0 \in V$ and $f \in L^2(0,T;H)$, then $u \in \MR_\fra(H)$ is a solution of 
	\begin{equation}\label{eq:*}
	\left\{\begin{aligned}
        \dot u(t) + B(t) \A(t)u(t)  &= f(t) \quad \text{a.e.}\\
                        u(0)    &=u_0.
        \end{aligned}\right.
    \end{equation}
    if and only if $v:= \A_1^{1/2}u \in \MR(V,V')$ is a solution of
   \begin{equation}\label{eq:**}
    \left\{\begin{aligned}
        \dot v + \A_1^{1/2}B\A_1^{1/2}v + \A_1^{1/2}B\A_2\A_1^{-1/2}v+ \A_1^{1/2}(\A_1^{-1/2})\dot{} v  &= \A_1^{1/2} f \quad \text{a.e.}\\
                        v(0)    &= \A_1^{1/2}(0)u_0.
        \end{aligned}\right.
	\end{equation}
\end{proposition}
\begin{proof}
	Let $u \in \MR_\fra(H)$ be a solution of \eqref{eq:*}, then by Theorem~\ref{thm:iso} \eqref{eq:chain_rule_for_Au} and Lemma~\ref{lem:diff_formula} (i) we obtain
	\begin{align*}
		\dot v &= \A_1^{1/2} \dot u + (\A_1^{1/2}) \dot{} u\\ 
		&= \A_1^{1/2} (f-B\A u) + (\A_1^{1/2}) \dot{} u\\
		&= \A_1^{1/2} (f-B\A \A_1^{-1/2}v) + (\A_1^{1/2}) \dot{} \A_1^{-1/2}v\\
		&= \A_1^{1/2} f -\A_1^{1/2}B\A_1^{1/2}v - \A_1^{1/2}B\A_2\A_1^{-1/2}v- \A_1^{1/2}(\A_1^{-1/2})\dot{}v.
	\end{align*}
	Hence $v$ is a solution of \eqref{eq:**}.
	
	Now suppose that $v=\A_1^{1/2}u$ satisfies \eqref{eq:**},  then by Theorem~\ref{thm:iso} \eqref{eq:chain_rule_for_v} we obtain
	\begin{align*}
		\dot u &= \A_1^{-1/2} \dot v + (\A_1^{-1/2}) \dot{} v\\ 
		&= \A_1^{-1/2} \left(\A_1^{1/2} f -\A_1^{1/2}B\A_1^{1/2}v - \A_1^{1/2}B\A_2\A_1^{-1/2}v-\A_1^{1/2} (\A_1^{-1/2})\dot{} v\right)\\ 
			&\quad+ (\A_1^{-1/2}) \dot{} v\\
		&= f -B\A_1^{1/2}v - B\A_2\A_1^{-1/2}v\\
		&= f -B\A u.
\end{align*}
	Hence $u \in \MR_\fra(H)$ is a solution of \eqref{eq:*}.
\end{proof}

\begin{proof}[Proof of Theorem~\ref{thm:well-posedness_in_H}]
	Let $u_0\in V$ and $f\in L^2(0,T;H)$.
	The operator
	\[
		\A_1^{1/2}B\A_1^{1/2} + \A_1^{1/2}B\A_2\A_1^{-1/2} + \A_1^{1/2}(\A_1^{-1/2})\dot{}
	\]
	is associated with a non-autonomous form satisfying the assumptions of Lions' theorem (Theorem~\ref{thm:Lions}),
	i.e.\ measurability, $V$-boundedness and quasi-coer\-cive\-ness.
	In fact, the first term $\langle \A_1^{1/2}(t) B(t) \A_1^{1/2}(t) u, v \rangle$ defines a measurable and $V$-bounded form by assumption h) on $B$
	and by Lemma~\ref{lem:square_root_strongly_continuous} d) and b).
	This form is coercive by assumption g)  on $B$ and b) on $\fra_1$.
	The second term defines a strongly continuous bounded mapping from $[0,T]$ into $\L(H,V')$ again by Lemma~\ref{lem:square_root_strongly_continuous}.
	And the last term is a strongly continuous bounded mapping form $[0,T]$ into $\L(V,H)$.
	Thus, by Young's inequality, the 2nd and 3rd term are perturbations preserving quasi-coerciveness.
	Hence \eqref{eq:**} has a unique solution $v \in \MR(V,V')$.
	We conclude by Proposition~\ref{prop:equiv_of_solutions} that $u:= \A_1^{-1/2}v \in \MR_\fra(H)$ is the unique solution of \eqref{eq:*}.

It remains to show the estimate \eqref{eq:MR_estimate}.
At first we note that it follows from the continuous inverse that there exists a constant $c_0>0$ such that
\begin{equation}\label{eq:mrconst}
	\norm{u}_{\MR_\fra(H)} \le c_0 \left[ \norm{u(0)}_V + \norm{\dot u + B(.)\A(.) u}_{L^2(0,T;H)} \right]
\end{equation}
for each $u \in \MR_\fra(H)$.
We have to show that this constant does not depend on the spaces and forms we choose but merely on the constants enumerated in the statement of Theorem~\ref{thm:well-posedness_in_H}.
Assume that this is false.
Then we find Hilbert spaces $V_n, H_n$ such that $V_n \underset d\hookrightarrow H_n$, $\norm{v}_{V_n} \le c_H \norm{v}_{H_n}$, we find non-autonomous forms
\[
	\fra_n \colon [0,T]\times V_n \times V_n \to \C
\]
of the form $\fra_n = \fra_{n1} + \fra_{n2}$,
as well as strongly continuous functions $B_n \colon [0,T]\to \L(H)$ such that conditions a) -- h) of Theorem~\ref{thm:well-posedness_in_H} are satisfied
with uniform constants $\beta_0, \beta_1, M_1, M_2, \alpha, T, \dot M$ and $\gamma$
and finally we find $u_n \in \MR_{\fra_n}(H_n)$ with
\[
	\norm{u_n}_{\MR_{\fra_n}(H_n)} \ge n \left[ \norm{u_n(0)}_{V_n} + \norm{\dot u_n + B_n(.)\A_n(.) u_n}_{L^2(0,T;H_n)} \right].
\]
We will show that this leads to a contradiction.
For that we consider the Hilbert products $H:=\bigoplus_{n\in \N} H_n$, $V:=\bigoplus_{n\in \N} V_n$.
Then $V \underset d\hookrightarrow H$, $\norm{v}_{V} \le c_H \norm{v}_{H}$ for all $v \in V$.
We define the form
\[
	\fra(t,u,v):= \sum_{n=1}^\infty \fra_n(t,u_n,v_n)
\]
for $u=(u_n)_{n\in\N}, v=(v_n)_{n\in\N} \in V$, $t \in [0,T]$
and the function
\[
	B\colon [0,T] \to \L(H)
\]
given by $B(t) h = (B_n(t) h_n)_{n\in\N}$ for all $h=(h_n)_{n\in\N} \in H$.
Then conditions a) -- h) of  Theorem~\ref{thm:well-posedness_in_H} are satisfied
with the same constants $\beta_0, \beta_1, M_1, M_2, \alpha, T, \dot M$ and $\gamma$ as above.
So there exists a constant $c_0>0$ such that \eqref{eq:mrconst} holds for all $u \in \MR_\fra$.
Now fix $n > c_0$, for the particular choice $u:=(0, \dots, 0, u_n, 0, \dots) \in \MR_\fra(H)$ we obtain
\begin{align*}
\norm u _{\MR_\fra(H)} &= \norm{u_n}_{\MR_{\fra_n}(H_n)}\\
	 &\ge n \left[ \norm{u_n(0)}_{V_n} + \norm{\dot u_n + B_n(.)\A_n(.) u_n}_{L^2(0,T;H_n)} \right]\\ 
	 &= n \left[ \norm{u(0)}_{V} + \norm{\dot u + B(.)\A(.) u}_{L^2(0,T;H)} \right],
\end{align*}
which is a contradiction.
\end{proof}

\begin{remark}\label{rem5.4}
    The well posedness result in Theorem \ref{thm:well-posedness_in_H} remains true if
    $\fra_1$ is merely quasi-coercive  instead of coercive.
    In fact, then we may replace $\fra_2$ by $\tilde \fra_2(t,u,v) = \fra_2(t,u,v) - \omega ({u} \mid {v})_H$
    and $\fra_1$ by $\tilde \fra_1(t,u,v) = \fra_1(t,u,v) + \omega ({u} \mid {v})_H$
    and have $\fra = \tilde \fra_1 + \tilde \fra_2$ in the desired form.
\end{remark}

\section{Applications}\label{section:applications}
This section is devoted to applications of our results on existence 
and maximal regularity of Section \ref{section:well-posedness_in_H} 
to concrete evolution equations. 
We show  how they can be applied to both
linear and non-linear evolution equations. 
We give examples illustrating the  theory without seeking for generality.
In all examples the underlying field is $\R$.

\subsection{The Laplacian with non-autonomous Robin boundary conditions}\label{subsection:example1}
Let $\Omega$ be a  bounded domain of $\R^d$ with Lipschitz boundary $\Gamma$. 
Denote by $\sigma$ the $(d-1)$-dimensional Hausdorff measure on $\Gamma$.
Let  
\[
	\beta\colon [0,T]  \times \Gamma \to \R
\]
be a bounded measurable function which is Lipschitz continuous w.r.t.\ the first variable, i.e.,
\begin{equation}\label{lipbeta}
 	\lvert \beta(t,x) - \beta(s, x) \rvert \le M \lvert t-s\rvert
\end{equation}
for some constant $M$ and all $t, s \in [0,T], \ x \in \Gamma$. We consider the symmetric form
\[
	\fra\colon [0,T] \times H^1(\Omega) \times H^1(\Omega) \to \R
\]
defined by 
\begin{equation}\label{formbeta}
	\fra(t, u, v) = \int_\Omega \nabla u \nabla v\ \d x + \int_\Gamma \beta(t, .) u v\ \d\sigma.
\end{equation}
In the second integral we omitted the trace symbol; 
we should write $u\vert_\Gamma v\vert_\Gamma$ if we want to be more precise.
The form $\fra$ is $H^1(\Omega)$-bounded and quasi-coercive. 
The first statement follows readily from the continuity 
of the trace operator and the boundedness of $\beta$. 
The second one is a consequence of the inequality 
\begin{equation}\label{trace-comp}
\int_\Gamma \lvert u \rvert^2 \ \d\sigma \le \epsilon \norm u_{H^1}^2 + c_\epsilon \norm u_{L^2(\Omega)}^2,
\end{equation}
which is valid for all $\epsilon > 0$ ($c_\epsilon$ is a constant depending on $\epsilon$). 
Note that $\eqref{trace-comp}$ is a consequence of compactness of the trace as an operator from $H^1(\Omega)$ into $L^2(\Gamma, \d \sigma)$, see \cite[Chap.\ 2 § 6, Theorem 6.2]{Nec67}.

The operator $A(t)$ associated with $\fra(t,.,.)$ on $H:= L^2(\Omega)$ 
is minus the Laplacian  with  time dependent Robin boundary conditions
\[
	\partial_\nu u(t) + \beta(t,.) u = 0 \text{ on } \Gamma.
\]
Here we use the following weak definition of the normal derivative.
Let $v \in H^1(\Omega)$ such that $\Delta v \in L^2(\Omega)$.
Let $h \in L^2(\Gamma, \d \sigma)$.  Then $\partial_\nu v = h$
by definition if
$\int_\Omega \nabla v \nabla w + \int_\Omega \Delta v w = \int_\Gamma h w \, \d \sigma$ for all $w \in H^1(\Omega)$.
Based on this definition, the domain of $A(t)$ is the set
\[
	D(A(t)) = \{ v \in H^1(\Omega) : \Delta v \in L^2(\Omega),  \partial_\nu v + \beta(t,.) v\vert_\Gamma = 0 \},
\]
and for $v\in D(A(t))$ the operator is given by $A(t)v = - \Delta v$.

By Theorem \ref{thm:well-posedness_in_H}, the heat equation 
\begin{equation*}
\left\{  \begin{aligned}
          \dot u(t)  - \Delta u(t)  &= f(t)\\
                        u(0)    &=u_0 \in H^1(\Omega)\\
                        \partial_\nu u(t) + \beta(t,.) u &= 0   \text{ on } \Gamma
                         \end{aligned} \right.
\end{equation*}
    has a unique solution  $u \in H^1(0,T;L^2(\Omega))\cap L^2(0,T;H^1(\Omega))$ whenever  $f \in L^2(0,T, L^2(\Omega))$. 
    This example is  also valid for  more general elliptic operators than the Laplacian. We could even include elliptic 
    operators with time dependent  coefficients.

\subsection{Schrödinger operators with time-dependent potentials}
Let $0 \le m_0 \in L^1_{loc}(\R^d)$ and 
$m \colon [0, T] \times \R^d \to \R$ be a measurable function for which
there exist positive constants $\alpha_1, \alpha_2 $ and $M$  such that for a.e.\ $x$
\[
	\alpha_1 m_0(x)  \le m(t, x) \le \alpha_2 m_0(x)
\]
and
\[
	\lvert m(t, x) - m(s, x) \rvert \le M \lvert t - s \rvert m_0(x) \quad x \text{-a.e.}
\]
for all $t, s \in [0,T]$.
We define the form 
\[
	\fra(t, u, v) = \int_{\R^d} \nabla u \nabla v\ \d x + \int_{\R^d} m(t,x) u v\ \d x
\]
with domain 
\[ 
	V = \Big\{ u \in H^1(\R^d) : \int_{\R^d} m_0(x) \lvert u \rvert^2 \ \d x < \infty \Big\}.
\]
It is clear that $V$ is a Hilbert space for the norm $\norm u_V$ given by 
\[
	\norm u_V^2  = \int_{\R^d} \lvert \nabla u \rvert^2 \ \d x +  \int_{\R^d} m_0(x) \lvert u \rvert^2 \ \d x.
\]
In addition, $\fra$ is $V$-bounded and coercive. 
Its associated operator on $L^2(\R^d)$ is formally given by 
\[
	A (t) = -\Delta + m(t,.).
\]
Given $f \in L^2(0,T, L^2(\R^d))$ and $u_0 \in V$, we apply Theorem \ref{thm:well-posedness_in_H} 
and obtain a unique solution $u \in H^1(0,T;L^2(\R^d))\cap L^2(0,T;V)$ of the evolution equation
 \begin{equation*}
\left\{  \begin{aligned}
          \dot u(t)  - \Delta u(t)  + m(t,.) u(t) &= f(t) \quad \text{a.e.}\\
                        u(0)    &=u_0.
                         \end{aligned} \right.
\end{equation*}

\subsection{A quasi-linear heat equation}\label{sec:quasilinear}
In this subsection we consider the non-linear evolution equation
\begin{equation*}
(\NLCP) \left\{  \begin{aligned}
          \dot u(t)  &=  m(t, x, u(t), \nabla u(t)) \Delta u(t) +  f(t)\\
                        u(0)    &=u_0 \in H^1(\Omega)\\
                        \partial_\nu u(t)&+ \beta (t, .)u (t )= 0 \text{  on  } \Gamma.\\
                         \end{aligned} \right.
                         \end{equation*}
The function $m$ is supposed to be measurable from $[0,T] \times \Omega \times \R^{1+d}$ 
with values in  $[\delta, \frac{1}{\delta}]$ for some constant  $\delta > 0$ and continuous in the last variable. 
The domain $\Omega \subset \R^d$ is bounded with Lipschitz boundary and the function $\beta$ 
satisfies $\eqref{lipbeta}$. By a \emph{solution} $u$ of $(\NLCP)$ we mean 
a function $u \in H^1(0,T, L^2(\Omega)) \cap L^2(0,T, H^1(\Omega))$ such that 
$\Delta u(t) \in L^2(\Omega)$ $t$-a.e.\ and the equality 
$$\dot u(t)  =  m(t, x, u(t), \nabla u(t)) \Delta u(t) +  f(t)$$ holds for a.e.\ $t \in [0,T]$ such that the boundary condition is satisfied (cf.\ Section~\ref{subsection:example1}).
We have the following result.
\begin{theorem}\label{thNL} 
	Let $f \in L^2(0,T, L^2(\Omega))$ and 
	$u_0 \in H^1(\Omega)$. Then  there exists a solution 
	$u \in H^1(0,T, L^2(\Omega)) \cap L^2(0,T, H^1(\Omega))$ of $(\NLCP)$.
\end{theorem}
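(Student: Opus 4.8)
The plan is to solve $(\NLCP)$ by a fixed-point argument built on the linear maximal regularity result, Corollary~\ref{corW} (or Theorem~\ref{thm:well-posedness_in_H}), applied with a multiplicative perturbation $B$ that encodes the nonlinearity. First I would rewrite the equation in the form $\eqref{eq:CP_with_B}$: dividing by $m(t,u(t))$, the PDE becomes
\[
	\frac{1}{m(t,u(t))}\,\dot u(t) - \Delta u(t) = \frac{1}{m(t,u(t))} f(t),
\]
where $-\Delta$ is understood with the Robin boundary condition $\partial_\nu u + \beta(t,.)u = 0$, i.e.\ the operator $A(t)$ associated with the form $\fra$ in $\eqref{formbeta}$ on $H = L^2(\Omega)$, $V = H^1(\Omega)$. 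Given $w \in \MR(V,H) = H^1(0,T;L^2(\Omega)) \cap L^2(0,T;H^1(\Omega))$, set $B_w(t)g := \frac{1}{m(t,w(t))} g$. Since $m$ is continuous with values in $[\delta,\tfrac1\delta]$ and $w$ is measurable, $B_w:[0,T]\to\L(H)$ is strongly measurable with $\delta\,\norm g_H^2 \le (B_w(t)g\mid g)_H \le \tfrac1\delta \norm g_H^2$, so the hypotheses on $B$ in Section~\ref{section:well-posedness_in_H} hold with $\beta_0 = \delta$, $\beta_1 = \tfrac1\delta$ \emph{independently of $w$}. The form $\fra$ of $\eqref{formbeta}$ is symmetric, $H^1$-bounded, quasi-coercive and Lipschitz in $t$ (by $\eqref{lipbeta}$), so after the splitting trick of Remark~\ref{rem5.4} it satisfies the hypotheses of Corollary~\ref{corW}. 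Hence for each $w$ there is a unique $u =: \Phi(w) \in \MR(V,H) \cap C([0,T];V)$ solving $B_w(t)\dot u(t) + A(t)u(t) = B_w(t)f(t)$, $u(0)=u_0$, and by $\eqref{eq:MR_estimate}$ we get a bound $\norm{\Phi(w)}_{\MR(V,H)} \le C\big[\norm{u_0}_V + \norm{f}_{L^2(0,T;H)}\big]$ with $C$ depending only on $\delta$, $\Omega$, $\beta$, $T$ — \emph{not} on $w$. A fixed point of $\Phi$ is precisely a solution of $(\NLCP)$.

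Next I would set up the fixed-point scheme on the closed ball $K = \{w \in \MR(V,H) : \norm w_{\MR(V,H)} \le R\}$ with $R$ the constant from the a~priori bound; by the above, $\Phi(K)\subseteq K$. To obtain a fixed point I would run Schauder's theorem, using that $\MR(V,H)$ embeds compactly into $L^2(0,T;H)$ (this is the Aubin--Lions lemma, since $H^1(\Omega) \hookrightarrow L^2(\Omega)$ compactly on the bounded Lipschitz domain $\Omega$, together with $H^1(0,T;L^2(\Omega))$). The remaining point is continuity of $\Phi$ for the $L^2(0,T;H)$-topology on $K$: if $w_n \to w$ in $L^2(0,T;H)$ with $(w_n)\subset K$, then after passing to a subsequence $w_n(t,x) \to w(t,x)$ a.e., hence by continuity of $m$ and dominated convergence $\frac{1}{m(\cdot,w_n(\cdot))} \to \frac{1}{m(\cdot,w(\cdot))}$ in, say, $L^p$ for every $p<\infty$, and in particular $B_{w_n}(t) \to B_w(t)$ strongly for a.e.\ $t$ with uniform bound $\tfrac1\delta$. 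Writing $u_n = \Phi(w_n)$, the uniform bound $\norm{u_n}_{\MR(V,H)}\le R$ lets me extract a weakly convergent subsequence $u_n \rightharpoonup u$ in $\MR(V,H)$, strongly in $L^2(0,T;H)$; passing to the limit in the weak formulation $\int_0^T (B_{w_n}(t)\dot u_n(t)\mid v)\,\psi(t)\,\d t + \int_0^T \fra(t,u_n(t),v)\psi(t)\,\d t = \int_0^T (B_{w_n}(t)f(t)\mid v)\psi(t)\,\d t$ — using the strong convergence of $B_{w_n}$ against the weak convergence of $\dot u_n$ — identifies $u$ as $\Phi(w)$. A standard subsequence argument upgrades this to $\Phi(w_n)\to\Phi(w)$, so $\Phi$ is continuous; Schauder then yields a fixed point $u\in K$, which solves $(\NLCP)$ and automatically satisfies $\Delta u(t) = \frac{1}{m(t,u(t))}(\dot u(t) - f(t)) \in L^2(\Omega)$ a.e.

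The main obstacle I anticipate is the limit passage $\int B_{w_n}\dot u_n \to \int B_w \dot u$: this is a product of a term converging only strongly ($B_{w_n}$, i.e.\ $\tfrac{1}{m(\cdot,w_n(\cdot))}$) and a term converging only weakly ($\dot u_n$ in $L^2(0,T;H)$), so one must be careful that the strong convergence of the multiplier is in a topology strong enough — strong operator convergence a.e.\ with a uniform $L^\infty$ bound suffices against weak $L^2$ convergence by a dominated-convergence-type argument, but this needs the a.e.\ pointwise convergence of $w_n$ that one only gets along a subsequence, which is why the continuity of $\Phi$ has to be phrased via subsequences. A secondary technical point is verifying that $\Phi$ genuinely maps into $C([0,T];V)$ with $u(0)=u_0$ for every $w$, but this is immediate from Corollary~\ref{corW} since the hypotheses there hold uniformly in $w$. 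Everything else — the form bounds, the Aubin--Lions compactness, Remark~\ref{rem5.4} — is routine.
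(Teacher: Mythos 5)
Your proposal is correct and follows essentially the same route as the paper: linearize via the multiplicative perturbation $B_w(t)g=\frac{1}{m(t,w(t))}g$, invoke Theorem~\ref{thm:well-posedness_in_H} (with Remark~\ref{rem5.4} for quasi-coercivity) to get a solution map with the $w$-independent bound \eqref{eq:MR_estimate}, use Aubin--Lions compactness of $\MR(V,H)\hookrightarrow L^2(0,T;H)$, and prove continuity by extracting subsequences with a.e.\ convergence of $w_n$, pairing the strong convergence of $B_{w_n}(\cdot)g$ (dominated convergence) against the weak convergence of $\dot u_n$, exactly as in the paper's proof of Theorem~\ref{thNL}. The only cosmetic difference is that you run Schauder on a closed $\MR(V,H)$-ball inside $L^2(0,T;H)$ while the paper takes the map $S$ on all of $L^2(0,T;H)$; both are sound.
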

We shall use Schauder's fixed point theorem to prove this result.  
This idea is classical in PDE but it is here that we need in an essential way 
the maximal regularity result for the corresponding 
non-autonomous linear evolution equation as well as the estimate established in 
Theorem~\ref{thm:well-posedness_in_H}.
Some of our arguments are similar to those in \cite{AC10}.  
We  emphasize that we could replace in 
$(\NLCP)$ the Laplacian by an elliptic operator with time-dependent coefficients 
(with an appropriate Lipschitz continuity with respect to $t$). 
Again, we do not search for further generality in order to make the
ideas in the proof more transparent. 

\begin{proof}[Proof of Theorem \ref{thNL}.]  
Let us denote by $H$ the Hilbert space $ L^2(\Omega)$, let $V=  H^1(\Omega)$ 
and denote by $A (t)$ the operator on $H$ associated with the form $\fra(t,.,.)$ defined by $\eqref{formbeta}$. 
As made precise in Subsection \ref{subsection:example1}, $A(t)$ is the negative Laplacian with boundary conditions 
$\partial_\nu u(t)+ \beta (t, .)u (t )= 0$  on $\Gamma$.
Given $v \in L^2(0,T, V)$ we set for $g \in H$
\[
	B_v(t) g = m(t,x,v(t), \nabla v(t)) g.
\]
Note that 
\begin{equation}\label{bound-for-B}
\delta  \norm g_H^2 \le (B_v(t)g \mid g)_H  \le \frac{1}{\delta}  \norm g_H^2.
\end{equation}
By Theorem \ref{thm:well-posedness_in_H} there exists a unique $u \in \MR(V,H) = H^1(0,T, H) \cap L^2(0,T, V)$ such that 
\begin{equation*}
\left\{  \begin{aligned}
           \dot u(t)  &= -  B_v(t) \A(t) u(t) +  f(t)\\
                        u(0)    &=u_0 \in V.
                         \end{aligned} \right.
                         \end{equation*}
Now we consider the mapping
$$S \colon L^2(0,T,V) \to L^2(0,T,V), \ Sv = u.$$
By the estimate $\eqref{eq:MR_estimate}$ of Theorem~\ref{thm:well-posedness_in_H}, we have 
\begin{equation}\label{est-MR}
\norm  u_{\MR_\fra(H)} \le C \left[ \norm f _{L^2(0,T, H)} + \norm {u_0}_V \right],
\end{equation}
with a constant $C$ which is independent of $v$. 
Since $V = H^1(\Omega)$ is compactly embedded into $H=L^2(\Omega)$ 
(recall that $\Omega$ is bounded and has Lipschitz boundary), 
we obtain from Corollary~\ref{cor:embedding_in_continuous_functions} that $\MR_\fra(H)$ is compactly embedded in $L^2(0,T;V)$.
As a consequence, it is enough to prove continuity of $S$ 
and then apply Schauder's fixed point theorem to find 
$u \in \MR(V,H)$ such that $S u = u$.  Such $u$ is a solution of $(\NLCP)$. 

Now we prove continuity of $S$. For this, we consider a sequence 
$(v_n)$ which converges to $v$ in $L^2(0,T, V)$ and let $u_n = S(v_n)$. It is enough to prove 
that  $(u_n)$ has a subsequence which converges to $S v$. 
For each $n \in \N$, $u_n$ is the solution of 
\begin{equation*} (\CP)_n  \left\{  \begin{aligned}
           \dot u_n(t)  &=   - B_{v_n}(t) \A(t) u_n(t) +  f(t)\\
                        u_n(0)    &=u_0 \in V.
        \end{aligned} \right.
\end{equation*}
By $\eqref{est-MR}$, the sequence $(u_n) $ is bounded in $\MR_\fra(H)$ and hence by extracting a subsequence we may 
assume that  $(u_n)$ converges weakly to some $u$ in $\MR_\fra(H)$. In particular $\A(.)u_n(.) \to \A(.)u$ weakly in $L^2(0,T;H)$.
Then $(u_n)_{n \in \N}$ converges in norm to $u$ in $L^2(0,T, V)$ by the compact embedding of $\MR_\fra(H)$ in $L^2(0,T, V)$. 
By extracting a subsequence again we can also assume that $v_n(t)(x) \to v(t)(x)$ and $\nabla v_n(t)(x) \to \nabla v(t)(x)$ a.e.\ with respect to $t$ and to $x$.
Hence for $g \in L^2(0,T;H)$ we have $B_{v_n}(.) g(.) \to B_v(.) g(.)$ a.e.\ and also in $L^2(0,T;H)$ by the Dominated Convergence Theorem.
Thus for all $g \in L^2(0,T;H)$ we obtain
\begin{align*}
 0 &= \int_0^T (\dot u_n(t) + B_{v_n}(t)\A(t) u_n(t) -f(t) \mid g)_H \ \d{t} \\
 	&= \int_0^T (\dot u_n(t) \mid g)_H \ \d{t} + \int_0^T (\A(t) u_n(t)\mid B_{v_n}(t) g)_H \ \d{t} - \int_0^T (f(t) \mid g)_H \ \d{t}\\
 	&\to \int_0^T (\dot u(t) \mid g)_H \ \d{t} + \int_0^T (\A(t) u(t)\mid B_{v}(t) g)_H \ \d{t} - \int_0^T (f(t) \mid g)_H \ \d{t}\\
 	&=\int_0^T (\dot u(t) + B_{v}(t)\A(t) u(t) -f(t) \mid g)_H \ \d{t} \quad (n \to \infty).
\end{align*}
Now the particular choice of $g=\dot u(t) + B_{v}(t)\A(t) u(t) -f(t)$ shows that
$\dot u(t) = - B_{v}(t)\A(t) u(t) +f(t)$.
Finally, the fact that $ \MR(V,H) \hookrightarrow C([0,T];H)$ together with the weak convergence in $\MR(V,H)$ of $(u_n)$
to $u$ imply 
\[
	u_0 = u_n(0) \to u(0).
\]
We conclude that $ u = S v$ which is the desired identity. 
\end{proof}


\section{Appendix: Mapping properties for 1-dimen\-sional Sobo\-lev spaces}

In this section we consider Sobolev spaces defined on an interval $(0,T)$,  where $T>0$,  with values in a Hilbert space $H$. Given $u\in L^2(0,T;H)$ a function
 $\dot u \in L^2(0,T;H)$ is called \emph{the weak derivative} of $u$ if
\[
	-\int_0^T  u(s) \dot \varphi (s) \ \d s = \int_0^T  \dot u (s)\varphi (s) \ \d s
\]
for all $\varphi \in C^\infty_c(0,T)$. Thus we merely test with scalar-valued test functions $\varphi$ on $(0,T)$. It is clear that the weak derivative $\dot u$ of $u$ is unique whenever it exists. We let
\[
	H^1(0,T;H):=\{u\in L^2(0,T;H):u \text{  has  a weak derivative  }  \dot u \in L^2(0,T;H)\}.
\]
It is easy to see that $H^1(0,T;H)$ is a Hilbert space for the scalar product
\[
	(u\mid v)_{H^1(0,T;H)}:=\int^T_0 \Big[ (u(t)\mid v(t))_H + (\dot u (t)\mid \dot v (t))_H \Big] \ \d t.
\]
As in the scalar case \cite[Section 8.2]{Bre11} one shows the following.

\begin{proposition}\label{prop:hauptsatz}
{\rm a)}
 Let $u\in H^1(0,T;H)$. Then there exists a unique $w\in C([0,T];H)$ such that $u(t)=w(t)$ a.e.\ and
 \[
	 w(t)=w(0)+\int^t_0 \dot u (s) \ \d s.
 \]
{\rm b)} Conversely, if $w \in C([0,T];H),v\in L^2(0,T;H)$ such that $w(t)=w(0)+\int^t_0 v(s) \, \d s$, then $w \in H^1(0,T;H)$ and $\dot w = v$.
\end{proposition}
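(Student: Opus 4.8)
The plan is to mimic the classical scalar argument of \cite[Section~8.2]{Bre11}, the only new ingredient being that Bochner integration replaces the Lebesgue integral. The cornerstone is the following du~Bois-Reymond type lemma: \emph{if $h \in L^1(0,T;H)$ satisfies $\int_0^T h(s)\dot\varphi(s)\,\d s = 0$ for all $\varphi \in C^\infty_c(0,T)$, then $h$ equals a constant vector $c \in H$ almost everywhere.} To prove it, fix $\varphi_0 \in C^\infty_c(0,T)$ with $\int_0^T \varphi_0(s)\,\d s = 1$ and put $c := \int_0^T h(s)\varphi_0(s)\,\d s$. For arbitrary $\psi \in C^\infty_c(0,T)$ the function
\[
	\varphi(t) := \int_0^t \Big( \psi(s) - \Big(\textstyle\int_0^T \psi(r)\,\d r\Big) \varphi_0(s) \Big)\,\d s
\]
lies again in $C^\infty_c(0,T)$ — it is smooth, and it has compact support in $(0,T)$ precisely because the integrand has total integral $0$, so $\varphi$ vanishes near both endpoints — and $\dot\varphi = \psi - (\int_0^T\psi(r)\,\d r)\,\varphi_0$. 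Inserting $\varphi$ into the hypothesis yields $\int_0^T (h(s) - c)\psi(s)\,\d s = 0$ for every $\psi \in C^\infty_c(0,T)$, and the vector-valued fundamental lemma of the calculus of variations (reduce to the scalar case by applying functionals $\ell \in H'$, using that $h$ is essentially separably valued) gives $h = c$ a.e.

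For part~a), I would first record, via Fubini's theorem for Bochner integrals, that $g(t) := \int_0^t \dot u(s)\,\d s$ has weak derivative $\dot u$: for $\varphi \in C^\infty_c(0,T)$,
\[
	\int_0^T g(t)\dot\varphi(t)\,\d t = \int_0^T \dot u(s) \Big( \int_s^T \dot\varphi(t)\,\d t \Big)\,\d s = -\int_0^T \dot u(s)\varphi(s)\,\d s,
\]
since $\varphi(T)=0$. Hence $u - g \in L^1(0,T;H)$ has vanishing weak derivative, so by the lemma $u - g = c_0$ a.e.\ for some $c_0 \in H$. The function $w := g + c_0$ is continuous — an indefinite Bochner integral is continuous, in fact absolutely continuous — it agrees with $u$ a.e., and $w(0) = c_0$, $w(t) = w(0) + \int_0^t \dot u(s)\,\d s$. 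Uniqueness is immediate: two functions in $C([0,T];H)$ that agree a.e.\ on $(0,T)$ are equal.

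For part~b), the same Fubini computation (with $v$ in place of $\dot u$) shows that $t \mapsto \int_0^t v(s)\,\d s$ has weak derivative $v$; adding the constant $w(0)$, which has weak derivative $0$, we get that $w$ has weak derivative $v$. Since $w \in C([0,T];H) \subset L^2(0,T;H)$ and $v \in L^2(0,T;H)$, this means $w \in H^1(0,T;H)$ with $\dot w = v$. The only point that requires genuine care is the compact-support claim for the auxiliary function $\varphi$ in the lemma — this is exactly where the normalization $\int_0^T\varphi_0 = 1$ is used; everything else is Fubini–Tonelli together with the continuity of indefinite Bochner integrals, and the fact that $H$ is infinite-dimensional causes no extra difficulty since the test functions $\varphi,\psi$ remain scalar-valued.
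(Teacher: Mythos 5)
Your proof is correct and follows precisely the route the paper indicates, namely the adaptation of the scalar argument of Br\'ezis, Section~8.2, to Bochner integrals (the paper itself only cites this and gives no details): the vector-valued du~Bois-Reymond lemma, the Fubini computation showing that an indefinite Bochner integral has the integrand as weak derivative, and continuity of the indefinite integral are exactly the intended ingredients, and each step, including the compact-support check for the auxiliary test function, is carried out correctly.
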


In the following we always identify $u\in H^1(0,T;H)$ with its unique continuous representative $w$ according to a).

Next we establish a mapping theorems for Sobolev spaces.
Let $X, Y$ be Hilbert spaces.
\begin{proposition}\label{prop:lipschitz_continuous_operators}
    Let $S\colon [0,T]\to \L(Y,X)$ be Lipschitz continuous.
    Then the following holds.
    \begin{enumerate}[label={\rm \alph*)}]
        \item There exists a bounded, strongly measurable function
            $\dot S \colon [0,T] \to \L(Y,X)$ such that
            \[
                \frac \d {\d t} S(t)u = \dot S(t)u \quad (u \in Y)
            \]
            for a.e.\ $t\in [0,T]$ and
            \[
                \norm{\dot S(t)}_{\L(Y,X)} \le L \quad (t \in [0,T])
            \]
            where $L$ is the Lipschitz constant of $S$.
        \item If $ u\in H^1(0,T;Y)$, then
            $Su := S(.)u(.) \in H^1(0,T;X)$ and
            \begin{equation}\label{eq:chain_rule}
                (Su)\dot{} = \dot S(.)u(.)+ S(.) \dot u(.).
            \end{equation}
    \end{enumerate}
\end{proposition}
For the proof of Proposition~\ref{prop:lipschitz_continuous_operators} we
recall the following. If a function $u\colon[0,T]\to Y$ is absolutely
continuous, then $\dot u(t) := \frac \d{\d t} u(t)$ exists almost
everywhere and $u(t) = u(0) + \int_0^t \dot u(s) \, \d s$
\cite[Proposition 1.2.3 and Corollary 1.2.7]{ABHN11}. In fact, the
space of all absolutely continuous functions on $[0,T]$ with
values in $Y$ is the same as the Sobolev space $W^{1,1}(0,T;Y)$
and $\dot u$ coincides with the weak derivative (this is true for
a Banach space $Y$ if and only if it has the Radon-Nikodým
property). The function $u$ is in $H^1(0,T;Y)$ if and only if $u
\in W^{1,1}(0,T;Y)$ and $\dot u \in L^2(0,T;Y)$.
\begin{proof}[Proof of Proposition \ref{prop:lipschitz_continuous_operators}.]
    a) Since for $y \in Y$, $S(.)y$ is Lipschitz continuous,
        the derivative $\frac \d {\d t} S(t)y$ exists a.e.\ (see \cite[Sec. 1.2]{ABHN11}).
        Let $Y_0$ be a countable dense subset of $Y$.
        There exists a Borel null set $N \subset [0,T]$ such that
        $\frac \d {\d t} S(t)y$ exists in $X$ for all $t \notin N$
        and all $y \in Y_0$. Since $S$ is Lipschitz-continuous it
        follows easily that $ \frac \d {\d t} S(t)y$ exists also
        for all $y \in \overline{Y_0}= Y$ and $t \notin N$. Let
        \begin{equation*}
            \dot S(t) y =   \begin{cases}
                            \frac \d {\d t} S(t)y   &\text{if } t \notin N \text{ and}\\
                            \quad 0             &\text{if } t \in N.
                        \end{cases}
        \end{equation*}
        Let $L$ be the Lipschitz constant of $S$. 
	Then $\dot S(t) \in \L(Y,X)$, with $\norm{\dot S(t)}_{\L(Y,X)} \le L$ for all $t \in [0,T]$
        and $\dot S(.)y$ is measurable for all $y\in V$.

    b) Let $u \in H^1(0,T;Y)$. Then $u \in C([0,T];Y)$ and
        \[
            \norm u _\infty := \sup_{t \in [0,T]} \norm{u(t)}_Y < \infty.
        \]
        Denote the supremum norm of $S$ by
        \[
            \norm S_\infty := \sup_{t \in [0,T]} \norm{S(t)}_{\L(Y,X)}.
        \]
        We first show that $Su$ is absolutely continuous in $X$.
        Let $\epsilon > 0$. Since $u$ is absolutely continuous in $Y$
        there exists a $\delta >0$ such that
        \[
            \sum_i \norm{u(b_i)-u(a_i)}_Y \le \norm{S}_\infty^{-1} \frac \epsilon 2
        \]
        for each finite collection of non-overlapping intervals
        $(a_i, b_i)$ in $(0,T)$ satisfying $\sum_i (b_i-a_i) < \delta$.
        We may take $\delta > 0$ so small that $L \norm u_\infty \delta < \frac \epsilon 2$.
        Then
        \begin{align*}
            \sum_i \norm{S(b_i)u(b_i)& - S(a_i)u(a_i)}_X\\
                &\le \sum_i \norm{( S(b_i) - S(a_i) )u(b_i)}_X + \sum_i \norm{S(a_i)( u(b_i) - u(a_i) )}_X\\
                &\le L \sum_i (b_i-a_i) \norm u _\infty + \norm S_\infty  \norm S_\infty^{-1} \frac \epsilon 2\\
                &< L \, \delta \norm u_\infty + \frac \epsilon 2 \le \epsilon.
        \end{align*}
        Thus $Su$ is absolutely continuous. Moreover
        \begin{align*}
            (Su)\dot{}\, (t)    &= \lim_{h\to 0} \tfrac 1 h (S(t+h) u(t+h) - S(t) u(t))\\
                    &= \lim_{h\to 0} \big[ \tfrac 1 h (S(t+h)-S(t))(u(t+h)-u(t))\\
                        &\quad\quad \quad\quad + \tfrac 1 h (S(t+h)-S(t))u(t)\\
                        &\quad\quad \quad\quad + \tfrac 1 h S(t)(u(t+h)-u(t)) \big]\\
                    &= \dot S(t) u(t) + S(t) \dot u(t) \quad \text{a.e.}
        \end{align*}
        Thus $(Su)\dot{} \in L^2(0,T;X)$ and so $Su\in H^1(0,T;X)$.
\end{proof}

Next we consider a mapping theorem for $\MR$-spaces.
Let $X, Y, Z$ be Hilbert spaces such that $Y \underset d \hookrightarrow X$. We let 
$ \MR(Y,X) := H^1(0,T; X) \cap L^2(0,T;Y)$. 

\begin{proposition}\label{prop:Lip_MR}
	Let $S\colon[0,T]\to \L(X,Z)$ be strongly measurable and bounded such that $S$ is Lipschitz continuous with values in $\L(Y,Z)$; i.e.,
	\[
		\norm{S(t)y-S(s)y}_Z \le L \abs{t-s} \norm{y}_Y \quad (y \in Y,\ t,s \in [0,T]).
	\]
	Then $Su \in H^1(0,T;Z)$ for all $u \in \MR(Y,X)$ and
	\[
		(Su)\dot{} = \dot S u + S \dot u.
	\]
\end{proposition}
\begin{proof}
	1) Let $u \in H^1(0,T;Y)$. Then by Proposition~\ref{prop:lipschitz_continuous_operators} we have $Su \in H^1(0,T;Z)$ and $(Su)\dot{} = \dot S u + S \dot u$.
	By the assumptions on $S$ there exists a constant $c\ge 0$ such that
	\[
		\norm{Su}_{H^1(0,T;Z)} \le c \norm{u}_{\MR(Y,X)}
	\]
	for all $u \in H^1(0,T;Y)$.
	
	2) Let $u \in \MR(Y,X)$. By \cite[p.\ 105]{Sho97} there exists $u_n \in H^1(0,T;Y)$ such that $u_n \to u$ in $\MR(Y,X)$ as $n \to \infty$.
	It follows that $Su_n \to Su$ and $\dot S u_n + S \dot u_n \to \dot S u + S \dot u$ in $L^2(0,T;Z)$. Thus for $\varphi \in \D(0,T)$,
	\begin{align*}
		- \int_0^T Su \dot \varphi \ \d{t} &= \lim_{n\to\infty} - \int_0^T Su_n \dot \varphi \ \d{t}\\
			&= \lim_{n\to\infty} \int_0^T (\dot S u_n + S \dot u_n)  \varphi \ \d{t}\\
			&= \int_0^T (\dot S u + S \dot u)  \varphi \ \d{t}.
	\end{align*}
	This proves the claim.
\end{proof}
Note that  Proposition \ref{prop:lipschitz_continuous_operators} remains true if $X$ and $Y$ are Banach spaces such that $Y$ 
has the Radon-Nikodým property (e.g., if $Y$ is reflexive or a separable dual space). Proposition \ref{prop:Lip_MR} remains true if $Z$ has the 
Radon-Nikodým property. Moreover, in all these cases $H^1$ could be replaced by $W^{1,p}, 1 \le p < \infty$. 


\noindent
\emph{Wolfgang Arendt}, \emph{Dominik Dier}, Institute of Applied Analysis, 
University of Ulm, 89069 Ulm, Germany,\\
\texttt{wolfgang.arendt@uni-ulm.de}, \texttt{dominik.dier@uni-ulm.de}

\quad\\
\noindent
\emph{Hafida Laasri}, Fachbereich C - Mathematik und Naturwissenschaften, University of Wuppertal, Gaußstraße 20,
42097 Wuppertal, Germany,\\
\texttt{laasrihafida@gmail.com}

\quad\\
\noindent
\emph{El Maati Ouhabaz,} Institut de Math\'ematiques (IMB), Univ.\  Bordeaux, 351, cours de la Libération, 33405 Talence cedex, France,\\ 
\texttt{Elmaati.Ouhabaz@math.u-bordeaux1.fr}

\end{document}